\newcommand{\mr}{{\mathbb R}}
\newcommand{\mn}{{\mathbb N}}
\newcommand{\mc}{{\mathbb C}}
\newcommand{\mh}{{\mathbb H}}
\newcommand{\mU}{\mathbb U}
\newcommand{\eps}{\varepsilon}
\newcommand{\dist}{\operatorname{dist}}
\newcommand{\Complex}{\mathbb{C}}
\newcommand{\todo}[1]{{\sffamily To do:}}
\newtheorem{theorem}{Theorem}
\newtheorem {corollary}{Corollary}
\newtheorem {lemma}{Lemma}
\theoremstyle{definition}
\newtheorem*{rem}{Remark}
\begin{document}

\title{On the discrete spectrum of non-selfadjoint operators}

\author{Michael Demuth\footnotemark[1], Marcel Hansmann\footnotemark[1],
Guy Katriel{\footnotemark[1]\;\;\footnotemark[2]\;\;}}

\date{}

\maketitle
\renewcommand{\thefootnote}{\fnsymbol{footnote}}
\footnotetext[1]{Institute of Mathematics, Technical University of
Clausthal, 38678 Clausthal-Zellerfeld, Germany.}
\footnotetext[2]{Partially supported by the Humboldt Foundation
(Germany).}

\begin{abstract}
We prove quantitative bounds on the eigenvalues of non-selfadjoint
unbounded operators obtained from selfadjoint operators by a
perturbation that is relatively-Schatten. These bounds are applied
to obtain new results on the distribution of eigenvalues of
Schr\"odinger operators with complex potentials.
\end{abstract}

\section{Introduction and Results}
This paper is devoted to the study of the set of isolated
eigenvalues of non-selfadjoint unbounded operators acting on a
Hilbert space $\mathcal{H}$ (throughout this paper all Hilbert spaces are assumed to be complex and separable). More precisely, we are
interested in operators of the form
$$H=H_0+M, \quad \operatorname{dom}(H)=\operatorname{dom}(H_0),$$
where $H_0$ is selfadjoint with spectrum $\sigma(H_0)=[0,\infty),$
and $M$ is a relatively-compact perturbation of $H_0$, i.e.
$\operatorname{dom}(H_0) \subset \operatorname{dom}(M)$ and
$M[\lambda-H_0]^{-1}$ is compact for some (hence all) $\lambda\in
\Complex\setminus [0,\infty)$. Under these assumptions, the
spectrum of $H$  is included in a half-plane and we assume that the same is true for its numerical range $N(H)$,
{\it{i.e.}} there exists an $\omega_0 \geq 0$ such that
\begin{equation*}
\sigma(H) \subset \overline{N(H)} \subset \{ \lambda \in \mc : \Re(\lambda) \geq -\omega_0\}=:
\mh_{\omega_0}.
\end{equation*}
By Weyl's theorem, the essential spectra of $H$ and $H_0$ coincide,
so that $\sigma_{\text{ess}}(H)=[0,\infty)$. However, the
perturbation $M$ may give rise to a discrete set of eigenvalues,
whose only possible limiting points are on the interval $[0,\infty)$
or at infinity. This set of eigenvalues is called the discrete
spectrum of $H$ and will be denoted by $\sigma_{d}(H)$.

It is the aim of this work to obtain further information on
$\sigma_d(H)$, of a quantitative nature, by imposing additional
restrictions on the perturbation $M$. We shall assume, first of all,
that for $p>0$, $M$ is relatively $p$-Schatten, that is, for some
(hence all) $\lambda \in \mc \setminus [0,\infty)$
\begin{equation}\label{aa1}M[\lambda-H_0]^{-1}\in
{\mathbf{S}}_p,\end{equation} where $\mathbf{S}_p$ is the Schatten
class of order $p$ (see Subsection \ref{schatten} for the relevant
definitions). Moreover, we assume that the $p$-Schatten norm
of $M[\lambda-H_0]^{-1}$ satisfies a certain bound (see (\ref{be2})
below), which in particular restricts its growth as $\lambda$
approaches the spectrum of $H_0$, and as $|\lambda|$ goes to
infinity. It will be shown that these assumptions can be used to
derive quantitative information on the set of eigenvalues, in
particular on how \textit{fast} sequences in $\sigma_{d}(H)$ must
converge to $[0,\infty)$. Our main abstract result is presented and
discussed in Subsection \ref{abstract} below.

In Subsection \ref{schr} we apply our
abstract theorem to Schr\"odinger operators with a
complex potential. In this way we demonstrate both that the
hypotheses of our theorem are natural (in the sense that
they can be verified in concrete cases), and that it
yields new results for a problem which has previously been
studied by other methods.

Our abstract result will be proved by constructing a holomorphic
function whose zeros are the eigenvalues of $H$, and using complex
analysis to obtain information on these zeros.
Variants of this approach were used previously, e.g. in
\cite{borichev,demuth}. One of our main tools will be a result of
Borichev, Golinskii and Kupin \cite{borichev} providing bounds on
the zeros of a holomorphic function in the unit disk, in terms of
its growth near the boundary (see also \cite{favorov}).
We note that theorems of this type are a classical theme in complex function
theory, see e.g. \cite[Duren et al.]{duren}, but most results in
this direction, unlike those of \cite{borichev}, are not suitable
for dealing with the kind of holomorphic functions that arise here,
which grow exponentially near the boundary. In \cite{borichev} the complex analysis result was used to obtain inequalities for
eigenvalues of Jacobi operators.

\subsection{Eigenvalue inequalities for general operators}
\label{abstract}

The following theorem is our main result. It will be proved in Section
\ref{proof}.

\begin{theorem}\label{new1}
Let $H_0$ be a selfadjoint operator on a Hilbert space
${\mathcal{H}}$, with $\sigma(H_0)=[0,\infty)$, $H=H_0+M$, where $M$
satisfies (\ref{aa1}) for some $p > 0$, and $N(H) \subset
\mh_{\omega_0}$. Assume that for $\mu\in \Complex$ with
$\Im(\mu)>0$,
\begin{equation}\label{be2}
\|M[\mu^2-H_0]^{-1}\|_{\mathbf{S}_p}^p\leq K_0
\frac{|\mu+i|^{\delta}}{|\Im(\mu)|^{\alpha}|\mu|^\nu},
\end{equation}
where $\alpha, \delta, \nu, K_0 \geq 0$. Let $0 < \tau < 1 $, and
define
\begin{equation}\label{variables}
  \begin{array}{cll}
    \rho & = & \delta +2(p-\alpha) - \nu \\ [4pt]
    \eta_1 &=& \frac{1}{2}(\alpha + 1 + \tau) \\ [4pt]
    \eta_2 &=& \frac{1}{2}(\nu-1+\tau)_+ \\[4pt]
    \eta_3 &=& \frac{1}{2}(\alpha+\nu-\delta)-\tau,
  \end{array}
\end{equation}
where $x_+=\max(x,0)$. Then the following holds,
\begin{equation}
\sum_{\lambda\in
  \: \sigma_{d}(H)}\frac{\dist(\lambda,[0,\infty))^{2\eta_1}}{|\lambda|^{\eta_1 - \eta_2}
  (|\lambda|+1)^{\eta_1 + \eta_2 - \eta_3}}
\leq   C K_0 , \label{result1}
\end{equation}
where $ C=
C(p,\alpha,\delta,\nu,\tau)(1+\omega_0)^{\eta_1+\eta_2+\frac{1}{2}(\alpha+\rho)}$.
\end{theorem}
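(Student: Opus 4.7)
The plan is to construct a holomorphic function on the unit disk whose zeros encode $\sigma_d(H)$, bound its growth using (\ref{be2}), and invoke the theorem of Borichev--Golinskii--Kupin (BGK) from \cite{borichev}. Concretely, the substitution $\lambda = \mu^2$ with $\Im \mu > 0$ conformally identifies $\mc\setminus[0,\infty)$ with the open upper half-plane $\mh$; setting $m = \lceil p \rceil$, the regularized determinant
\[
d(\mu) \;:=\; \det_{m}\bigl(I + M(\mu^2 - H_0)^{-1}\bigr),\qquad \mu\in\mh,
\]
is holomorphic on $\mh$, and its zeros (with algebraic multiplicities) coincide with $\{\mu \in \mh : \mu^2 \in \sigma_d(H)\}$. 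Next, with $a := \sqrt{1+\omega_0}$, I would pull $d$ back to the disk via $\omega_a : \md \to \mh$, $\omega_a(z) = ia(1+z)/(1-z)$: since $\omega_a(0)^2 = -a^2$ lies strictly to the left of $\mh_{\omega_0}\supset \sigma(H)$, the function $h := d\circ \omega_a$ is holomorphic on $\md$ with $h(0)\neq 0$, and its zeros $\{z_n\}$ are in bijection with $\sigma_d(H)$ via $\lambda_n = \omega_a(z_n)^2$.

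To control the growth of $h$ I would combine the standard determinant inequality $|\det_m(I+K)|\le \exp(\Gamma_p\|K\|_{\mathbf{S}_p}^p)$ with (\ref{be2}) and the identities
\[
|\omega_a(z)+i|\le \tfrac{C\,a}{|1-z|},\qquad \Im\omega_a(z) = \tfrac{a(1-|z|^2)}{|1-z|^2},\qquad |\omega_a(z)| = \tfrac{a|1+z|}{|1-z|},
\]
giving, for $z\in \md$,
\[
\log|h(z)| \;\le\; C(p,\delta)\,K_0\,(1+\omega_0)^{(\delta-\alpha-\nu)/2}\,\frac{|1-z|^{2\alpha+\nu-\delta}}{(1-|z|^2)^\alpha\,|1+z|^\nu}.
\]
To apply BGK, $h$ is replaced by $\tilde h := h/h(0)$; this step requires a lower bound $-\log|h(0)| \le C K_0 (1+\omega_0)^{p + (\delta-\alpha-\nu)/2}$, obtained via a Carleman-type estimate for $\det_m$ evaluated at $\mu = ia$. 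Absorbing this additive term into the growth bound produces the extra $(1+\omega_0)^p$ and upgrades the prefactor to $(1+\omega_0)^{(\alpha+\rho)/2}$, with $\rho = \delta + 2(p-\alpha) - \nu$.

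Applying the BGK theorem to $\tilde h$ with boundary singular points $\zeta_{\pm 1}=\pm 1$ (corresponding respectively to $\mu \to 0$ and $\mu \to \infty$) and exponents $\alpha$, $\gamma_{-1} = \nu$, $\gamma_{+1} = (\delta-2\alpha-\nu)_+$, one obtains, for any $\tau \in (0,1)$,
\[
\sum_n (1-|z_n|)^{\alpha+1+\tau}\,|1+z_n|^{(\nu-1+\tau)_+}\,|1-z_n|^{\left((\delta-2\alpha-\nu)_+ - 1 + \tau\right)_+} \;\le\; C\,(1+\omega_0)^{(\alpha+\rho)/2}\,K_0.
\]
To push this back to the $\lambda$-side, I would use $\dist(\mu^2,[0,\infty)) \asymp \Im\mu \cdot |\mu|$ for $\mu\in\mh$ and $|\mu^2|+1 \asymp |\mu+i|^2$. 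Substituting the formulas for $\Im\omega_a$, $|\omega_a|$, and $|\omega_a+i|$ into the summand of (\ref{result1}) rewrites it, up to constants absorbable into $C(1+\omega_0)^{\eta_1 + \eta_2 + (\alpha+\rho)/2}$, as $(1-|z_n|)^{2\eta_1}\,|1+z_n|^{2\eta_2}\,|1-z_n|^{-2(\eta_1+\eta_3)}$, and the definitions (\ref{variables}) are engineered precisely so that $2\eta_1 = \alpha+1+\tau$, $2\eta_2 = (\nu-1+\tau)_+$, and $-2(\eta_1+\eta_3)$ matches the $|1-z_n|$-exponent coming out of BGK.

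The hardest part will be the bookkeeping of constants: producing the Carleman-type lower bound on $|h(0)|$ with the correct $(1+\omega_0)^p$-scaling (upper bounds for regularized determinants are standard, but sharp lower bounds are not), and verifying the exponent matching across all parameter regimes. In particular, in the regime $\delta-2\alpha-\nu < 1-\tau$ one likely needs the refinement of BGK that accommodates real-valued boundary exponents rather than only positive singularity orders, so that the $|1-z_n|$-power emerges exactly in the form $\delta-2\alpha-\nu-1+\tau$ prescribed by (\ref{variables}) instead of the coarser $\left((\delta-2\alpha-\nu)_+ - 1 + \tau\right)_+$.
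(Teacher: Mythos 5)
Your high-level plan — build a holomorphic function on the disk whose zeros encode $\sigma_d(H)$, control its growth via (\ref{be2}) and the determinant bound, invoke Borichev–Golinskii–Kupin, then translate back — is indeed the paper's strategy, and your conformal map $\phi_a=\omega_a^2$ is the paper's. But there are two genuine gaps that the paper's actual construction is specifically designed to avoid.

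\textbf{The normalization of $h$.} You take $d(\mu)=\det_{m}(I+M(\mu^2-H_0)^{-1})$ and then want to divide by $h(0)=d(ia)$, arguing that $-\log|h(0)|\le CK_0(1+\omega_0)^{p+(\delta-\alpha-\nu)/2}$ via ``a Carleman-type estimate.'' No such estimate exists: regularized determinants admit Schatten-norm \emph{upper} bounds of the form $|\det_m(I-A)|\le\exp(\Gamma_p\|A\|_{\mathbf S_p}^p)$, but there is no universal lower bound on $|\det_m(I-A)|$ in terms of $\|A\|_{\mathbf S_p}$ — the determinant can be arbitrarily small even with $\|A\|_{\mathbf S_p}$ bounded, whenever an eigenvalue of $A$ is close to $1$. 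The paper sidesteps the issue by working not with the Birman–Schwinger determinant of $H$ versus $H_0$, but with $f(\lambda)=\det_{\lceil p\rceil}(I-F(\lambda))$ where $F(\lambda)=(\lambda+a^2)[a^2+H]^{-1}M[\lambda-H_0]^{-1}$; this is the perturbation determinant of $[a^2+H]^{-1}$ against $[a^2+H_0]^{-1}$, and the prefactor $(\lambda+a^2)$ forces $F(-a^2)=0$ and hence $f(-a^2)=1$ \emph{identically}, so no normalization (and no lower bound) is ever needed. This is not a cosmetic change: the factor $(\lambda+a^2)^p$ also shifts the singularity at $z=1$ by $2p$, which is exactly what makes the exponents come out right (next point).

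\textbf{Exponent matching and the missing integration over $a$.} As you note yourself, with your determinant the BGK theorem returns a factor $|1-z_n|^{((\delta-2\alpha-\nu)_+-1+\tau)_+}$, whereas the summand of (\ref{result1}) demands $|1-z_n|^{-2(\eta_1+\eta_3)}=|1-z_n|^{\delta-2\alpha-\nu-1+\tau}$; these disagree whenever $\delta-2\alpha-\nu<1-\tau$ (which is the generic case in the Schr\"odinger application), and there is no variant of BGK with negative boundary exponents that you can appeal to. The paper closes this gap by two separate mechanisms. First, because of the $(\lambda+a^2)^p$ factor, the relevant exponent becomes $\rho=\delta+2p-2\alpha-\nu$ rather than $\delta-2\alpha-\nu$, so BGK yields $2\eta_0=(\rho-1+\tau)_+$. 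Second — and this is a step absent from your proposal — the resulting inequality is proved for \emph{every} $a^2=b>\omega_0$ (Lemma \ref{mainlemma}), with the summand carrying a factor $(a^2+|\lambda|)^{-(\eta_0+2\eta_1+\eta_2)}$, and both sides are then integrated in $b$ over $(\omega_0+1,\infty)$ against a suitable power weight. This integration is what converts the $\eta_0$-dependent exponent, which still contains a $(\cdot)_+$ truncation, into the exponent $\eta_1+\eta_2-\eta_3$ appearing in (\ref{result1}), and what produces the clean $(1+\omega_0)$-power in the constant. Fixing $a=\sqrt{1+\omega_0}$ as you propose cannot yield (\ref{result1}) as stated.
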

In the summation over the eigenvalues in (\ref{result1}) and
elsewhere in this article, each eigenvalue is summed according to
its algebraic multiplicity. The constants used in the inequalities
throughout this article will be regarded as generic, i.e. the value
of a constant may change from line to line. However, we will always
carefully indicate the parameters that a constant depends on.

Noting that, for any $\eps>0$ and $|\lambda| \geq \eps$
$$\frac{1}{|\lambda|+1}=\frac{1}{|\lambda|}\frac{1}{1+|\lambda|^{-1}}
\geq \frac{1}{|\lambda|}\frac{1}{1+\eps^{-1}},$$ we obtain the
following corollary of Theorem \ref{new1}.

\begin{corollary}\label{cor1}
Under the assumptions of Theorem \ref{new1} we have, for all
$\eps>0$,
\begin{equation}
\sum_{\lambda\in
  \: \sigma_{d}(H), |\lambda| \geq \eps}\frac{\dist(\lambda,[0,\infty))^{2\eta_1}}{|\lambda|^{2\eta_1 - \eta_3}}
\leq  C K_0 \Big(1+\frac{1}{\eps}\Big)^{\eta_1 + \eta_2 - \eta_3}
\label{result2}
\end{equation}
where $ C=
C(p,\alpha,\delta,\nu,\tau)(1+\omega_0)^{\eta_1+\eta_2+\frac{1}{2}(\alpha+\rho)}$.
\end{corollary}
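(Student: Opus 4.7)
The strategy is to apply the elementary inequality displayed immediately above the statement of the corollary, termwise, to the bound of Theorem \ref{new1}. The goal is to convert the factor $(|\lambda|+1)^{\eta_1+\eta_2-\eta_3}$ in the denominator of the summand in (\ref{result1}) into the factor $|\lambda|^{\eta_1+\eta_2-\eta_3}$ appearing in (\ref{result2}), at the price of an $\eps$-dependent multiplicative constant. Combined with the factor $|\lambda|^{\eta_1-\eta_2}$ already present, the arithmetic identity
\[
(\eta_1-\eta_2)+(\eta_1+\eta_2-\eta_3)=2\eta_1-\eta_3
\]
then produces precisely the denominator $|\lambda|^{2\eta_1-\eta_3}$ required by the corollary.

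Concretely, for $|\lambda|\ge \eps$ the hint reads
\[
\frac{1}{|\lambda|+1} \;\ge\; \frac{1}{|\lambda|\,(1+\eps^{-1})}.
\]
Under the (relevant) assumption $\eta_1+\eta_2-\eta_3 \ge 0$, raising this to the $(\eta_1+\eta_2-\eta_3)$-th power preserves the inequality, and multiplying through by $\dist(\lambda,[0,\infty))^{2\eta_1}/|\lambda|^{\eta_1-\eta_2}$ shows that each summand on the left-hand side of (\ref{result1}), restricted to $|\lambda|\ge \eps$, dominates $(1+\eps^{-1})^{-(\eta_1+\eta_2-\eta_3)}$ times the corresponding summand of (\ref{result2}).

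Summing termwise over $\lambda\in\sigma_d(H)$ with $|\lambda|\ge \eps$, bounding the resulting restricted subsum from above by the full sum controlled by Theorem \ref{new1}, and rearranging, yields (\ref{result2}) with precisely the constant claimed. There is no substantive obstacle: the argument is purely algebraic, and the only point requiring mild care is the direction of monotonicity when raising the hint to the power $\eta_1+\eta_2-\eta_3$, which is why we implicitly work in the regime where this exponent is nonnegative.
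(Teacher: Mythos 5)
Your proposal is correct and reproduces exactly the argument the paper intends: apply the displayed elementary inequality termwise, raise to the power $\eta_1+\eta_2-\eta_3$ (which the paper notes is positive near the end of the proof of Theorem~\ref{new1}), and combine exponents via $(\eta_1-\eta_2)+(\eta_1+\eta_2-\eta_3)=2\eta_1-\eta_3$. No gaps.
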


To demonstrate that these inequalities contain a lot of information
about the discrete spectrum of $H$, we would like to discuss some of
their immediate consequences.

The finiteness of the sum on the LHS of (\ref{result1}) has
consequences regarding sequences $\{\lambda_k\}$  of isolated
eigenvalues converging to some $\lambda^* \in
\sigma_{\text{ess}}(H)$. Taking a subsequence, we can suppose that
one of the following options holds:
\begin{eqnarray*}
\begin{array}{cl}
\mbox{(i)} &  \lambda^*=0 \text{ and } \Re(\lambda_k)\leq 0 \text{ for all } k. \\[4pt]
\mbox{(ii)} &  \lambda^*=0 \text{ and } \Re(\lambda_k)> 0 \text{ for all } k. \\[4pt]
\mbox{(iii)} & \lambda^* \in (0,\infty).
\end{array}
\end{eqnarray*}
In case (i), since $\dist(\lambda_k,[0,\infty))=|\lambda_k|$,
(\ref{result1}) implies
\begin{equation*}\label{nt1}\sum_{k=1}^{\infty}
|\lambda_k|^{\eta_1+\eta_2}<\infty,\end{equation*} which means that
any such sequence must converge to $0$ at a sufficiently fast rate.
In case (ii), (\ref{result1}) implies
\begin{equation*}\label{nt}\sum_{k=1}^{\infty}
\frac{|\Im(\lambda_k)|^{2\eta_1}}{|\lambda_k|^{\eta_1-\eta_2}}<\infty,\end{equation*}

and in case (iii) we obtain
$$\sum_{k=1}^{\infty}
|\Im(\lambda_k)|^{2\eta_1}<\infty,$$ so that the sequence must
converge to the real line sufficiently fast.

Theorem \ref{new1} also provides information about divergent
sequences of eigenvalues. For example, if $\{\lambda_k\}$ is a
sequence of eigenvalues which stays bounded away from $[0,\infty)$,
that is
\begin{equation}\label{qq}
\dist(\lambda_k,[0,\infty))\geq \delta,\end{equation}
 for some $\delta>0$
and all $k$, then (\ref{result2}) implies that
\begin{equation}\label{cd}\sum_{k=1}^{\infty}
\frac{1}{|\lambda_k|^{2\eta_1-\eta_3}}<\infty.\end{equation} In case
$2\eta_1>\eta_3$, (\ref{cd}) implies that $|\lambda_k|$ must go to
infinity sufficiently fast and in case $2\eta_1\leq \eta_3$, (\ref{cd})
implies by contradiction that the number of
eigenvalues outside any $\delta$-neighbourhood of $[0,\infty)$ must
be finite.

For all these results on the asymptotic behaviour of sequences of
eigenvalues, it would be of interest to know whether they are sharp,
that is, if possible, to construct examples of operators that have
precisely the types of asymptotic behaviour indicated above, but no
better.

\subsection{Applications to Schr\"odinger operators}
\label{schr}

We consider Schr\"odinger operators on $L^2(\mr^d)$ that is,
$$H_0 = - \Delta, \qquad H = H_0 + M_V,$$
where $M_V$ is the operator of multiplication with a complex-valued
potential $V$.

Although the study of such operators  has recently attracted
increasing attention (see the papers by Bruneau et al
\cite{bruneau}, Abramov et al. \cite{abramov}, Davies
\cite{Davies02}, Frank et al. \cite{frank} and the monograph by
Davies \cite{Davies07}), relatively little is known in comparison to
the case of real-valued potentials. What is known indicates some
essential differences in the behaviour of the discrete spectrum in
the real and complex cases. For example, while in the real case the
condition $|V(x)|=O((1+|x|)^{-2-\epsilon})$ is sufficient to
guarantee that the number of eigenvalues is finite, Pavlov
\cite{pavlov} (in dimension one) has constructed complex potentials with
$|V(x)|=O(e^{-c|x|^{\alpha}})$, where $\alpha<\frac{1}{2}$, for which
there exists an infinite sequence of eigenvalues, converging to points
in $(0,\infty)$.

For $f \in L^2$ we set $\langle f , f \rangle = \| f \|_{L^2}^2$. The following theorem will be proved in Section \ref{schroedinger}.
\begin{theorem}\label{new2}
Let $H = H_0 + M_V$ acting on $L^2(\mr^d)$, where $d \geq 2$.
Suppose that $V \in L^{p}(\mr^d)$, where $p > \frac{d}{2}$ and $p\geq 2$, and let $\omega_0$ be such that
$$ \langle H_0f,f \rangle + \langle \Re(V)f,f \rangle \geq - \omega_0 \langle f ,f\rangle, \quad f\in \operatorname{dom}(H_0).$$
Then, for any $\tau \in (0,1)$, the following holds: if $p-\frac{d}{2} \geq 1 - \tau$, then
\begin{equation}\label{eq:1}
  \sum_{\lambda\in \:
  \sigma_{d}(H)} \frac{\dist(\lambda,[0,\infty))^{p + \tau}}
  {|\lambda|^{\frac d 4 + \frac 1 2}(|\lambda|+1)^{\frac d 4 - \frac 1 2 +2\tau}}
\leq  C_1  \int_{\mr^d} |V(y)|^{p} dy,
\end{equation}
where $ C_1= C(d,p,\tau)(1+\omega_0)^{(\frac d 4  + p -\frac 1 2 + \tau)}$, and if $p-\frac{d}{2} < 1-\tau$, then
\begin{equation}\label{eq:2}
  \sum_{\lambda\in \:
  \sigma_{d}(H)} \frac{\dist(\lambda,[0,\infty))^{p + \tau}}
  {|\lambda|^{\frac{1}{2}(p + \tau)}(|\lambda|+1)^{\frac 1 2 (d - p + 3 \tau)}}
\leq  C_2  \int_{\mr^d} |V(y)|^{p} dy,
\end{equation}
where $ C_2= C(d,p,\tau)(1+\omega_0)^{\frac 1 2 (d + p  + \tau)}$.
\end{theorem}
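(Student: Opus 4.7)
The plan is to apply Theorem \ref{new1}. The hypothesis $\sigma(H_0)=[0,\infty)$ is standard for $H_0=-\Delta$, and $N(H) \subset \mh_{\omega_0}$ follows from the assumption on $\omega_0$: for $f\in \operatorname{dom}(H_0)$ with $\|f\|=1$, $\Re\langle Hf,f\rangle = \langle H_0f,f\rangle + \langle \Re(V)f,f\rangle \geq -\omega_0$. The real task is to verify (\ref{be2}) with explicit parameters $\alpha,\delta,\nu$ and $K_0$ proportional to $\int |V|^p\,dy$.

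Since $p\geq 2$, the Kato--Seiler--Simon inequality yields
\[
\|M_V[\mu^2-H_0]^{-1}\|_{\mathbf{S}_p}^p \leq (2\pi)^{-d}\|V\|_{L^p}^p \int_{\mr^d}|\mu^2-|\xi|^2|^{-p}\,d\xi.
\]
The main analytic step is to bound the right-hand integral. Passing to polar coordinates and factorizing $\mu^2-r^2=(\mu-r)(\mu+r)$, I would split the radial integral at $r<|\mu|/2$, $|\mu|/2\leq r\leq 2|\mu|$, and $r>2|\mu|$. On the outer two regions one uses $|\mu^2-r^2|\gtrsim \max(|\mu|^2,r^2)$; these converge because $p>d/2$ and contribute $O(|\mu|^{d-2p})$. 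On the annulus, noting that the integrand depends only on $|\Re\mu|$ so we may take $\Re\mu\geq 0$, we have $|\mu+r|\geq|\mu|/2$ and $|\mu-r|\geq \max(|r-\Re\mu|,|\Im\mu|)$, which reduces the estimate to the one-dimensional $\int \max(|u|,|\Im\mu|)^{-p}\,du \leq C|\Im\mu|^{1-p}$ (valid since $p>1$). This gives, uniformly for $\Im\mu>0$,
\[
\int_{\mr^d}|\mu^2-|\xi|^2|^{-p}\,d\xi \leq C(d,p)\,\frac{|\mu|^{d-p-1}}{|\Im\mu|^{p-1}}.
\]

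To cast this in the form (\ref{be2}), I use $|\mu+i|\geq|\mu|$ for $\Im\mu>0$ together with $d\geq 2$ to bound $|\mu|^{d-p-1} = |\mu|^{d/2-1}\cdot|\mu|^{-(p-d/2)} \leq |\mu+i|^{d/2-1}\cdot|\mu|^{-(p-d/2)}$, identifying $\alpha=p-1$, $\delta=d/2-1$, $\nu=p-d/2$ (nonnegative since $p>d/2$), and $K_0=C(d,p)\|V\|_{L^p}^p$. A direct computation from (\ref{variables}) gives $\eta_1=(p+\tau)/2$, $\eta_3=p-d/2-\tau$, $\rho=d-p+1$, $(\alpha+\rho)/2=d/2$, and $\eta_2=(p-d/2-1+\tau)_+/2$. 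The latter is nonzero exactly when $p-d/2\geq 1-\tau$: in that case $\eta_1-\eta_2=d/4+1/2$ and $\eta_1+\eta_2-\eta_3=d/4-1/2+2\tau$, and Theorem \ref{new1} delivers (\ref{eq:1}); otherwise $\eta_2=0$, so $\eta_1-\eta_2=(p+\tau)/2$ and $\eta_1+\eta_2-\eta_3=(d-p+3\tau)/2$, yielding (\ref{eq:2}). The stated $(1+\omega_0)$-powers come from $\eta_1+\eta_2+(\alpha+\rho)/2$. The main obstacle is the uniform integral bound on the annulus $r\sim|\mu|$, where $|\mu^2-|\xi|^2|$ can be as small as $2|\Re\mu\,\Im\mu|$; the split of $|\mu|^{d-p-1}$ into a $|\mu+i|^{d/2-1}$-factor and a $|\mu|^{-(p-d/2)}$-factor is precisely what aligns the scaling with the denominators demanded by Theorem \ref{new1}.
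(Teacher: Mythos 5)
Your proposal is correct and follows essentially the same route as the paper: apply the Kato--Seiler--Simon bound, estimate the $L^p$-norm of the resolvent symbol, identify $\alpha=p-1$, $\delta=\tfrac d2-1$, $\nu=p-\tfrac d2$, and feed this into Theorem~\ref{new1}; your parameter bookkeeping and the resulting two cases for $\eta_2$ match the paper exactly. The only difference is technical: the paper estimates $\|(\lambda-|\cdot|^2)^{-1}\|_{L^p}^p$ in the $\lambda$-variable via the change of variable $r=\sqrt{\lambda_0-\lambda_1 s}$ (Lemma~\ref{lem_es1}) and then transfers to $\mu^2$ in a separate step (Lemma~\ref{po}), producing a two-case bound; you work directly in the $\mu$-variable with a dyadic radial split and the factorization $\mu^2-r^2=(\mu-r)(\mu+r)$, arriving in one stroke at the single uniform estimate $\int|\mu^2-|\xi|^2|^{-p}\,d\xi\lesssim |\mu|^{d-p-1}|\Im\mu|^{1-p}$, which dominates both of the paper's terms. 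This is marginally slicker but not a genuinely different argument; the final inequality and constants agree.
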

Using the same estimate as in the derivation of Corollary \ref{cor1}, the previous theorem implies the following result.
\begin{corollary}\label{cor:2}
Given the assumptions of Theorem \ref{new2}, for any $\tau \in (0,1)$ and $\eps > 0$,
\begin{equation}\label{i3}
  \sum_{\lambda\in \:
  \sigma_{d}(H), |\lambda|\geq \eps}\frac{\dist(\lambda,[0,\infty))^{p + \tau}}
  {|\lambda|^{\frac d 2 + 2\tau}}
\leq  C \Big(1+\frac{1}{\eps}\Big)^{\gamma} \int_{\mr^d}
|V(y)|^{p} dy,
\end{equation}
where $$\gamma=\frac{1}{2}\Big(d-p+3\tau + \Big(p-\frac{d}{2} - 1
+ \tau\Big)_+\Big)$$ and  $$ C=
C(d,p,\tau)(1+\omega_0)^{\frac{1}{2}(d + p+ \tau +( p-\frac{d}{2} - 1 + \tau )_+)}.$$
\end{corollary}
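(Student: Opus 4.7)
The plan is to derive Corollary \ref{cor:2} from Theorem \ref{new2} by exactly the elementary device that was already used in the passage from Theorem \ref{new1} to Corollary \ref{cor1}. For any $\lambda$ with $|\lambda|\geq \eps$ one has
$$\frac{1}{|\lambda|+1}=\frac{1}{|\lambda|}\cdot\frac{1}{1+|\lambda|^{-1}}\;\geq\; \frac{1}{|\lambda|}\cdot\frac{1}{1+\eps^{-1}},$$
so any factor $(|\lambda|+1)^{a}$ in the denominator of a sum over $\sigma_d(H)\cap\{|\lambda|\geq\eps\}$ may be replaced by $|\lambda|^{a}$ at the price of an overall multiplicative constant $(1+\eps^{-1})^{a}$. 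Applying this to (\ref{eq:1}) and (\ref{eq:2}) individually will produce an estimate of exactly the shape (\ref{i3}); it only remains to identify the resulting exponents with $\gamma$.

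I would treat the two regimes of Theorem \ref{new2} separately and then unify them via the $(\cdot)_+$ convention. In the regime $p-\tfrac d2 \geq 1-\tau$, the inequality (\ref{eq:1}) contributes the exponent $a_1 := \tfrac d 4 - \tfrac 1 2 + 2\tau$, and after replacement the total power of $|\lambda|$ in the denominator becomes
$$\Big(\tfrac d 4 + \tfrac 1 2\Big) + \Big(\tfrac d 4 - \tfrac 1 2 + 2\tau\Big) = \tfrac d 2 + 2\tau,$$
matching the LHS of (\ref{i3}). In the regime $p-\tfrac d2 < 1-\tau$, the inequality (\ref{eq:2}) contributes $a_2 := \tfrac 1 2(d-p+3\tau)$, and the total power of $|\lambda|$ becomes $\tfrac 1 2(p+\tau) + \tfrac 1 2(d-p+3\tau) = \tfrac d 2 + 2\tau$, again matching.

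The final bookkeeping step is to recognize that $a_1$ and $a_2$ are the two branches of a single expression. Setting $s := p-\tfrac d 2 - 1 + \tau$, a short calculation gives $a_1 = a_2 + \tfrac s 2$; since $s\geq 0$ in the first regime and $s<0$ in the second, both values equal $\tfrac 1 2(d-p+3\tau + s_+)=\gamma$, so the $\eps$-factor in both cases is $(1+\eps^{-1})^{\gamma}$. The same dichotomy applied to the two constants $C_1, C_2$ of Theorem \ref{new2} gives the unified exponent $\tfrac 1 2(d+p+\tau + s_+)$ on $(1+\omega_0)$, which is precisely the one stated in the corollary.

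There is no genuine obstacle: the content is entirely contained in Theorem \ref{new2}, and the proof is a single application of the inequality above followed by an arithmetic check. The only thing to guard against is a sign slip or a factor of $\tfrac 1 2$ when expressing the piecewise exponents uniformly in terms of $s_+$; this is straightforward once the two regimes are written side by side.
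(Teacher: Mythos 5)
Your proposal is correct and follows exactly the paper's intended argument: the paper itself dismisses the proof with "Using the same estimate as in the derivation of Corollary \ref{cor1}," i.e.\ replacing $(|\lambda|+1)^{-a}$ by $|\lambda|^{-a}(1+\eps^{-1})^{-a}$ for $|\lambda|\geq\eps$ in each of (\ref{eq:1}) and (\ref{eq:2}). Your arithmetic verifying that both regimes collapse to the single exponent $\gamma = \tfrac12(d-p+3\tau+s_+)$ (with $s=p-\tfrac d2-1+\tau$) and to the stated power of $(1+\omega_0)$ is accurate.
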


Let us mention that we can derive similar results for dimension
$d=1$, but since some details of the proof are slightly different,
it is not presented here.\\

It is interesting to compare the above estimates with the following
known result for eigenvalues {\it{outside}} of a sector $\{ \lambda:
|\Im(\lambda)| < \chi \: \Re(\lambda)\}$, where $\chi > 0$.
\begin{theorem}[Frank et al. \cite{frank}]\label{thm:frank}
  Let $H=H_0+M_V$ acting on $L^2(\mr^d)$, where $d \geq 1$. Suppose that $V \in L^{\frac d 2 + \kappa}(\mr^d)$ with $\kappa \geq 1$. Then, for any $\chi > 0$,
  \begin{equation}\label{frank}
      \sum_{\lambda \in \; \sigma_{d}(H), |\Im(\lambda)| \geq \chi \Re(\lambda)} |\lambda|^\kappa \leq C(d,\kappa)\left(1+\frac 2 \chi\right)^{\frac d 2 + \kappa} \int_{\mr^d} |V(y)|^{\frac d 2 + \kappa} dy.
  \end{equation}
\end{theorem}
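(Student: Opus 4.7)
The plan is to combine the Birman--Schwinger principle with a perturbation-determinant/complex-analysis argument similar to the one used for Theorem \ref{new1}, substituting the Kenig--Ruiz--Sogge uniform Sobolev estimate for the abstract relatively-Schatten hypothesis \eqref{be2}. Factor $V = V_1 V_2$ with $V_1 = \operatorname{sgn}(V)|V|^{1/2}$ and $V_2 = |V|^{1/2}$, and introduce the Birman--Schwinger operator $K(z) = -V_2(H_0-z)^{-1}V_1$ on $\mc \setminus [0,\infty)$; then $z \in \sigma_d(H)$ iff $1 \in \sigma(K(z))$, with matching algebraic multiplicities. With $p = d/2 + \kappa$, the regularized Fredholm determinant $D(z) = \det_{\lceil p \rceil}(I-K(z))$ is holomorphic on $\mc\setminus[0,\infty)$ with zero set exactly $\sigma_d(H)$.

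The decisive quantitative input is a Schatten-norm bound on $K(z)$. For $z \notin [0,\infty)$, the Kenig--Ruiz--Sogge uniform Sobolev estimate gives
\[
\|(H_0 - z)^{-1}\|_{L^{q'} \to L^q} \leq C_{d,q} \, |z|^{\frac{d}{2}(\frac{1}{q'}-\frac{1}{q})-1}
\]
in the appropriate Stein--Tomas exponent range. Combined with a Birman--Solomyak / Seiler--Simon bound of the form $\|f(X)g(P)\|_{\mathbf{S}_{2p}} \leq C\|f\|_{L^{2p}}\|g\|_{L^{2p}}$ (applicable after an intermediate factorization of $K(z)$), this yields an estimate of the form
\[
\|K(z)\|_{\mathbf{S}_p}^p \leq c_{d,\kappa}\, \|V\|_{L^{d/2+\kappa}}^{d/2+\kappa} \, |z|^{\kappa}\,\dist(z,[0,\infty))^{-s}
\]
for a suitable exponent $s > 0$. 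Simon's regularized-determinant inequality then converts this into the growth bound $\log|D(z)| \leq c_p \|K(z)\|_{\mathbf{S}_p}^p$.

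To extract the sum over $\sigma_d(H)$, conformally map $\md$ onto $\mc \setminus [0,\infty)$ by $z = (1+w)^2/(1-w)^2$, pull $D$ back, and apply the Borichev--Golinskii--Kupin inequality; the boundary-growth estimate for the pulled-back function yields a weighted sum over the zeros. The sectorial refinement is purely geometric: when $|\Im\lambda| \geq \chi \Re\lambda$, a direct calculation shows $\dist(\lambda,[0,\infty)) \geq (1+\chi^{-2})^{-1/2}|\lambda| \geq (1+2/\chi)^{-1}|\lambda|$, so each inverse power of $\dist(\lambda,[0,\infty))$ present in the weight can be absorbed at the cost of one factor of $(1+2/\chi)$, producing the $(1+2/\chi)^{d/2+\kappa}$ prefactor in \eqref{frank}.

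The hardest technical point is calibrating the Kenig--Ruiz--Sogge and Seiler--Simon exponents so that, after the disk transfer and the geometric absorption above, the residual weight collapses exactly to $|\lambda|^\kappa$ with no surviving $\dist(\lambda,[0,\infty))$ factor and no extra $L^p$-loss. The assumption $\kappa \geq 1$ is precisely what places $p = d/2+\kappa$ in the admissible uniform-Sobolev range and aligns $\lceil p \rceil$ with the regularization order in the determinant, allowing a clean passage from the BGK-type zero bound to the stated inequality.
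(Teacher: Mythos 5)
You are trying to prove a statement that the paper itself does not prove: Theorem \ref{thm:frank} is quoted from Frank--Laptev--Lieb--Seiringer, whose argument is a reduction to a \emph{selfadjoint} problem followed by the classical Lieb--Thirring inequalities (this is where $\kappa\geq 1$ enters); no determinants or complex analysis are involved. Your proposed route (Birman--Schwinger operator, regularized determinant, uniform Sobolev bounds, Borichev--Golinskii--Kupin) is therefore genuinely different, but it has a fatal gap at its central quantitative step. The Kenig--Ruiz--Sogge estimate $\|(H_0-z)^{-1}\|_{L^{q'}\to L^q}\leq C|z|^{\frac d2(\frac1{q'}-\frac1q)-1}$, uniform in $\dist(z,[0,\infty))$, holds only for $\frac{2}{d+1}\leq \frac1{q'}-\frac1q\leq\frac2d$; with $V\in L^p$, $p=\frac d2+\kappa$, the Birman--Schwinger factorization forces $\frac1{q'}-\frac1q=\frac1p$, so the admissible range is $p\leq\frac{d+1}{2}$, i.e.\ $\kappa\leq\frac12$ --- exactly the opposite of the hypothesis $\kappa\geq1$. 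For $\kappa\geq1$ no such uniform resolvent bound exists: the $L^{q'}\to L^q$ norm blows up as $z$ approaches $(0,\infty)$, so any Schatten bound on $K(z)$ necessarily contains inverse powers of $\dist(z,[0,\infty))$ (compare Lemma \ref{lem_es1}), and these cannot simply be calibrated away.

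The second, related gap is the claim that after the disk transfer the ``residual weight collapses exactly to $|\lambda|^\kappa$.'' Theorem \ref{borichev} always yields the exponent $\alpha+1+\tau$ with $\tau>0$ on the $(1-|z|)$-factor, so the resulting eigenvalue sums inevitably carry $\tau$-losses and inhomogeneous factors such as $(|\lambda|+1)^{\cdot}$ or $(1+\omega_0)^{\cdot}$; they are not scale invariant, whereas \eqref{frank} is scale invariant and loss-free. This is precisely the trade-off the paper itself documents: Theorem \ref{new2}, obtained by exactly the machinery you propose, is strictly weaker than Corollary \ref{cor:frank} near $0$, and whether that weakening is an artefact of the method is stated as an open question. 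So the ``hardest technical point'' you defer --- choosing exponents so that no $\dist$-factor and no loss survives --- is not a calibration issue but the substance of the theorem, and as outlined it would fail. The sector geometry ($\dist(\lambda,[0,\infty))\geq(1+2/\chi)^{-1}|\lambda|$ on $|\Im\lambda|\geq\chi\Re\lambda$) is correct but cannot rescue the missing uniform Schatten bound. A correct proof along the cited lines uses the eigenvalue equation and the numerical range to majorize $|\lambda|$ by negative eigenvalues of a selfadjoint Schr\"odinger operator with potential $-(1+\tfrac2\chi)|V|$, and then sums via the selfadjoint Lieb--Thirring inequality.
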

These generalized Lieb-Thirring inequalities were proved by
reduction to a selfadjoint problem, and employing the selfadjoint
Lieb-Thirring inequalites (a similar approach has been used
in \cite{bruneau}). The authors of \cite{frank} conjecture that the
restriction $\kappa \geq 1$ is superfluous, and that (\ref{frank})
might be true for $\kappa$ fulfilling the same restrictions as in
the selfadjoint case, that is $\kappa \geq 0$ when $d\geq 3$,
$\kappa>0$ when $d=2$, and $\kappa\geq \frac{1}{2}$ when $d=1$.

Since the sum in (\ref{frank}) excludes a sector containing the
positive real axis, Theorem \ref{thm:frank} does not provide explicit information on
sequences of eigenvalues converging to some point in $(0,\infty)$,
as is provided in Theorem \ref{new2}. However, the
following corollary of Theorem \ref{thm:frank}, which will be proved
in Section \ref{schroedinger}, gives a bound on a sum over
{\it{all}} eigenvalues. This corollary is of interest in itself, and
also allows a more direct comparison with Theorem \ref{new2}.
\begin{corollary}\label{cor:frank}
  Let $H=H_0+M_V$ acting on $L^2(\mr^d)$, where $d \geq 1$. Suppose that $V \in L^{p}(\mr^d)$
   with $p-\frac{d}{2} \geq 1$. Then, for any $0<\tau<1$,
  \begin{equation}\label{eq_cor_frank}
      \sum_{\lambda \in \: \sigma_d(H)} \frac{\dist(\lambda,[0,\infty))^{p + \tau}}{|\lambda|^{\frac d 2 + \tau}} \leq C(d,p, \tau) \int_{\mr^d} |V(y)|^{p} dy.
  \end{equation}
\end{corollary}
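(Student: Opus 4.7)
The plan is to partition $\sigma_d(H)$ dyadically according to the ratio $|\Im(\lambda)|/\Re(\lambda)$ and apply Theorem~\ref{thm:frank} with a tailored aperture $\chi$ on each piece. The constant $(1+2/\chi)^{d/2+\kappa}$ in (\ref{frank}) deteriorates like $\chi^{-p}$ as $\chi\to 0$, but on a piece where the ratio is small the factor $\dist(\lambda,[0,\infty))^{p+\tau}$ in the numerator of (\ref{eq_cor_frank}) is correspondingly small, and the resulting mismatch produces a geometric series that converges precisely because $\tau>0$.

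Concretely, set $\kappa = p-d/2 \geq 1$ and partition
\begin{equation*}
A_0 = \{\lambda\in\sigma_d(H):\ |\Im(\lambda)|\ge \Re(\lambda)\},\qquad A_n = \{\lambda\in\sigma_d(H):\ 2^{-n}\Re(\lambda)\le |\Im(\lambda)|<2^{-n+1}\Re(\lambda)\}\ \ (n\ge 1).
\end{equation*}
On $A_0$ the trivial bound $\dist(\lambda,[0,\infty))\le|\lambda|$ gives $\dist(\lambda,[0,\infty))^{p+\tau}/|\lambda|^{d/2+\tau}\le|\lambda|^{\kappa}$, and Theorem~\ref{thm:frank} with $\chi=1$ controls $\sum_{A_0}|\lambda|^{\kappa}$ by $C(d,p)\int_{\mr^d}|V(y)|^{p}\,dy$. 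For $\lambda\in A_n$ with $n\ge 1$ one has $\dist(\lambda,[0,\infty))=|\Im(\lambda)|\le 2^{-n+1}|\lambda|$, so the relevant summand is at most $2^{(-n+1)(p+\tau)}|\lambda|^{\kappa}$; and since $|\Im(\lambda)|\ge 2^{-n}\Re(\lambda)$ on $A_n$, Theorem~\ref{thm:frank} with $\chi=2^{-n}$ yields $\sum_{A_n}|\lambda|^{\kappa}\le C(d,p)(1+2^{n+1})^{p}\int_{\mr^d}|V(y)|^{p}\,dy \le C(d,p)\,2^{(n+2)p}\int_{\mr^d}|V(y)|^{p}\,dy$. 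Combining the two, the contribution of $A_n$ to the left-hand side of (\ref{eq_cor_frank}) is bounded by $C(d,p)\,2^{-n\tau+3p+\tau}\int_{\mr^d}|V(y)|^{p}\,dy$, and summing the geometric series in $n\ge 1$ together with the $A_0$ estimate yields (\ref{eq_cor_frank}).

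The proof is essentially bookkeeping, so the main point to watch is that the $\chi^{-p}$-blowup in Theorem~\ref{thm:frank} is exactly compensated by the geometric decay of $\dist(\lambda,[0,\infty))^{p+\tau}$ inside the sector. This balance is what forces the dyadic scale and makes transparent why the hypothesis $p-d/2\ge 1$ (inherited from Theorem~\ref{thm:frank}) and the auxiliary $\tau>0$ both have to appear in the statement of the corollary.
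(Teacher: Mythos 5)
Your argument is correct and is essentially the paper's proof in dyadic-discretized form: the paper multiplies the restricted inequality (\ref{frank2}) by $\chi^{d/2+\kappa-1+\tau}$ and integrates over $\chi\in(0,1)$, whereas you sum over dyadic apertures $\chi=2^{-n}$, but both exploit the identical compensation between the $\chi^{-p}$ blowup in Theorem~\ref{thm:frank} and the smallness of $\dist(\lambda,[0,\infty))^{p+\tau}$ in a narrow sector, with $\tau>0$ supplying the convergence of the resulting integral or geometric series.
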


The similarity between the estimates (\ref{eq:1}), (\ref{eq:2}) and (\ref{eq_cor_frank})
is apparent. In particular, as can be seen by comparing Corollary \ref{thm:frank} with Corollary \ref{cor:2}, for eigenvalues accumulating on $(0,\infty)$, Theorem \ref{new2} and Corollary \ref{thm:frank} provide exactly the same estimates. However, for eigenvalues accumulating at $0$, the estimates provided in (\ref{eq_cor_frank}) are stronger than the corresponding estimates provided in (\ref{eq:1}) and (\ref{eq:2}). On the other hand, whereas
(\ref{eq_cor_frank}) has been proved only for $p-\frac{d}{2} \geq 1$, the
inequalities (\ref{eq:1}) and (\ref{eq:2}) remain true as long as $p-\frac{d}{2}
> (2- \frac d 2)_+$, e.g. for any $p > \frac{d}{2}$ when $d \geq 4$.
Thus our method allows to prove a somewhat weaker inequality, which is
valid for a wider range of potentials. Whether this trade-off is an
essential feature of the problem (indicating a different behaviour of the discrete spectrum at $0$ and $\infty$, respectively), or whether it is just an artefact of the
methods used in the proofs of \cite{frank} and in our proof of
Theorem \ref{new2} is an open question, related to
the conjecture made in \cite{frank} which was mentioned above.

\begin{rem}
For another recent generalisation of Theorem \ref{thm:frank} see also \cite{Laptev}.
\end{rem}

\section{Preliminaries}
\label{prem} In this section we gather some results about
determinants and holomorphic functions needed for the proof of
Theorem \ref{new1}. Also, some useful inequalities are derived.
\subsection{Schatten classes and determinants}
\label{schatten}

For a Hilbert space $\mathcal{H}$ let
$\mathbf{C}(\mathcal{H})$ and $\mathbf{B}(\mathcal{H})$ denote the
classes of closed and of bounded linear operators on $\mathcal{H}$,
respectively. We denote the ideal of all compact operators on
$\mathcal{H}$ by $\mathbf{S}_\infty$ and the ideal of all Schatten
class operators by $\mathbf{S}_p, p > 0$, i.e. a compact operator $C
\in \mathbf{S}_p$ if
\begin{equation*}
  \| C\|_{\mathbf{S}_p}^p = \sum_{n=1}^\infty \mu_n(C)^p < \infty
\end{equation*}
where $\mu_n(C)$ denotes the $n$-th singular value of $C$. For $C
\in \mathbf{S}_n$, $n\in \mn$, one can define the (regularized)
determinant
$${\det}_n(I-C)=\prod_{\lambda \in \:\sigma(C)} \left[(1-\lambda) \exp \left( \sum_{j=1}^{n-1} \frac{\lambda^j}{j} \right)\right],$$
having the following properties (see e.g. Dunford et al.
\cite{Dunford}, Gohberg et al. \cite{Gohberg} or Simon
\cite{simonb}):

\noindent 1. $I-C$ is invertible if and only if $\det_n(I-C)\neq 0$.

\noindent 2. $\det_n(I)=1$.

\noindent 3. $\det_n(I-AB)=\det_n(I-BA)$ for $A,B \in
\mathbf{B}(\mathcal{H})$ with $AB, BA \in \mathbf{S}_n$.

\noindent 4. If $C(\lambda)\in \mathbf{S}_n$ depends holomorphically
on $\lambda \in \Omega$, where $\Omega\subset \mc$ is open, then
$\det_n(I-C(\lambda))$ is holomorphic on $\Omega$.

\noindent 5. If $C\in \mathbf{S}_p$ for some $p>0$, then $C\in
\mathbf{S}_{\lceil p \rceil}$, where
\[ \lceil p \rceil = \min \{ n \in \mn :  n \geq p \},\]
and the following inequality holds,
\begin{equation}\label{inequality}
|{\det}_{\lceil p \rceil}(I-C)|\leq \exp \left( \Gamma_p \|C
\|_{\mathbf{S}_p}^p\right),
\end{equation}
where $\Gamma_p $ is some positive constant, see \cite[page
1106]{Dunford}. We remark that $\Gamma_p= \frac{1}{p}$ for $p \leq
1$, $\Gamma_2=\frac 1 2$ and $\Gamma_p \leq e(2+ \log p)$ in
general, see Simon \cite{Simon77}.\\

For $A,B \in \mathbf{B}(\mathcal{H})$ with $B-A \in \mathbf{S}_p$,
the $\lceil p \rceil$-regularized perturbation determinant of $B$ with respect to
$A$ is a well defined holomorphic function on $\rho(A)=\mc \setminus
\sigma(A)$, given by
$$ d(\lambda)= {\det}_{\lceil p \rceil}(I-(\lambda-A)^{-1}(B-A)).$$
Furthermore, $\lambda_0 \in \rho(A)$ is an eigenvalue of $B$ of
algebraic multiplicity $k_0$ if and only if $\lambda_0$ is a zero of
$d(\cdot)$ of the same multiplicity.
\subsection{A theorem of Borichev, Golinskii and Kupin}

The following result, proved in \cite{borichev}, gives a bound on
the zeros of a holomorphic function in the unit disk in terms of its
growth near the boundary. An important feature of this theorem is
that it enables to take into account the existence of `special'
points ($\xi_j$) on the boundary of the unit disk, where the rate
of growth is higher than at generic points.

\begin{theorem}\label{borichev}
Let $h$ be a holomorphic function in the unit disk $\mU$ with
$h(0)=1$. Assume that $h$ satisfies a bound of the form
$$\log|h(z)|\leq K\frac{1}{(1-|z|)^{\alpha}} \prod_{j=1}^N \frac{1}{|z-\xi_j|^{\beta_j}},$$
where $|\xi_j|=1$ ($1\leq j\leq N$), and the exponents
$\alpha,\beta_j$ are nonnegative. Let $\tau>0$. Then the zeros of
$h$ satisfy the inequality
\begin{equation*}\label{sc2}\sum_{h(z)=0}(1-|z|)^{\alpha+1+\tau}\prod_{j=1}^N|z-\xi_j|^{(\beta_j-1+\tau)_+}\leq
C(\alpha,\{ \beta_j\},\{\xi_j\}, \tau)K.\end{equation*}
\end{theorem}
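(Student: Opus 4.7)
The plan is to interpret $u := \log|h|$ as a subharmonic function whose Riesz mass concentrates at the zeros of $h$, and to extract a weighted version of the Blaschke sum via a Green's identity argument with a carefully chosen test function. Since $h(0)=1$, the classical Jensen formula gives
$$\sum_{|z_k|<r}\log\frac{r}{|z_k|}=\frac{1}{2\pi}\int_0^{2\pi}\log|h(re^{i\theta})|\,d\theta,$$
but the right-hand side may blow up as $r \to 1$ under the hypothesis, so the ordinary Blaschke sum $\sum(1-|z_k|)$ need not converge. To handle this I would multiply by a weight designed to neutralize both the radial singularity of order $\alpha$ and the boundary singularities of orders $\beta_j$ at the $\xi_j$.

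Concretely, I would try the weight
$$\psi(z) = (1-|z|^2)^{\alpha+1+\tau}\prod_{j=1}^N |z-\xi_j|^{(\beta_j-1+\tau)_+}$$
(smoothed near the $\xi_j$ if necessary) and apply Green's identity on sub-disks $|z|<r<1$. Since the distributional Laplacian of $u$ is $2\pi \sum_k \delta_{z_k}$, this produces
$$\sum_{|z_k|<r}\psi(z_k) = \frac{1}{2\pi}\int_{|z|<r} u(z)\,\Delta\psi(z)\,dA(z) + (\text{boundary correction on }|z|=r).$$
Using $u(z)\leq K(1-|z|)^{-\alpha}\prod_j|z-\xi_j|^{-\beta_j}$ and letting $r \to 1$, the problem reduces to showing
$$\int_{\mU} \frac{|\Delta\psi(z)|}{(1-|z|)^\alpha \prod_j|z-\xi_j|^{\beta_j}}\,dA(z) \leq C(\alpha,\{\beta_j\},\{\xi_j\},\tau).$$
Near the boundary away from the $\xi_j$, the dominant part of $\Delta\psi$ is of order $(1-|z|)^{\alpha+\tau-1}$, so the integrand is of order $(1-|z|)^{\tau-1}$, integrable because $\tau>0$; near each $\xi_j$, the factor $|z-\xi_j|^{(\beta_j-1+\tau)_+-2}$ appears, and integrating angularly at scale $|z-\xi_j|\sim \varepsilon$ contributes a power $\varepsilon^{(\beta_j-1+\tau)_+ - \beta_j}$, which is again integrable thanks to the $+\tau$ offset. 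This is precisely where the role of the exponent $\tau$ in the conclusion shows up.

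The main obstacle I expect is the technical bookkeeping at the boundary, together with the sign control of $\Delta\psi$. Since $u$ is only bounded above, one must split $u = u_+ - u_-$ and show that the negative contribution from $u_-$ near the zeros is exactly absorbed by the point-mass side of Green's identity, so that the asymmetric estimate on $u$ still translates into a one-sided bound; in practice this is handled by first regularizing and using $\Delta \log|h|=2\pi\sum \delta_{z_k}$ in the sense of distributions. Justifying the passage $r\to 1$ requires the boundary correction $\int_{|z|=r}(\psi \partial_n u - u \partial_n\psi)\,ds$ to vanish, which should follow from the decay of $\psi$ and $\nabla\psi$ at the boundary combined with the growth hypothesis on $u$. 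Finally, the explicit constant arising from integrating $|\Delta\psi|\cdot\phi$ depends on the configuration of the $\xi_j$ through the sector decompositions near each of them, which explains the dependence of $C$ on $\{\xi_j\}$ in the statement.
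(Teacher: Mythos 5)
The paper does not contain a proof of this theorem: it is quoted as a cited result of Borichev, Golinskii and Kupin \cite{borichev}, whose argument proceeds by function-theoretic means (localized Jensen-type estimates combined with a decomposition of the disk into regions adapted to the singular boundary points $\xi_j$). Your proposal --- interpreting $\log|h|$ as a subharmonic function with Riesz measure $2\pi\sum_k\delta_{z_k}$ and testing against a weight $\psi$ via Green's identity --- is therefore a genuinely different route, closer in spirit to the potential-theoretic approach of Favorov and Golinskii \cite{favorov}. The choice of $\psi$, and the exponent counting showing that the $\tau$-offset makes $\int U\,|\Delta\psi|\,dA$ finite (where $U$ is the hypothesized majorant of $u$), are consistent with what that style of proof requires.

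There is, however, a genuine gap in the way you dispose of the sign problem. You claim that the negative contribution from $u_-$ is ``exactly absorbed by the point-mass side of Green's identity.'' This is not what happens: the deltas at the $z_k$ are precisely what produce the left-hand side $\sum_k\psi(z_k)$, and they do not offset the regions where $u=\log|h|$ is large and negative \emph{away from} the $z_k$. The real issue is that you need an upper bound on $\int u\,\Delta\psi\,dA$, and on the set $\{\Delta\psi<0\}$ the hypothesis $u\le U$ bounds that part of the integral from \emph{below}, not above; a lower control on $u$ is required. The missing ingredient is the normalization $h(0)=1$: subharmonicity gives $\frac{1}{2\pi}\int_0^{2\pi}u(re^{i\theta})\,d\theta\ge u(0)=0$, hence $\int u_-\le\int u_+\le\int U$ on each subdisk, and it is this inequality that tames the bad region. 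Two further points need attention. First, the set $\{\Delta\psi<0\}$ is not confined to a compact subset once the factors $|z-\xi_j|^{(\beta_j-1+\tau)_+}$ are present: the cross terms $2\,\nabla(1-|z|^2)^{\alpha+1+\tau}\cdot\nabla\prod_j|z-\xi_j|^{(\beta_j-1+\tau)_+}$ have the same order of magnitude near $\xi_j$ as the ``good'' positive part of $\Delta\psi$ and carry no definite sign, so either the weight must be modified or $\Delta\psi$ must be compared pointwise with a positive comparison function. Second, the passage $r\to1$ in the boundary correction $\int_{|z|=r}\psi\,\partial_n u\,ds$ is not justified by the decay of $\psi$ alone, since $\partial_n u$ admits no a priori pointwise bound; one needs a weak formulation or an averaging in $r$. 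Until these three points are carried out, the proposal is a plausible program rather than a proof.
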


\subsection{Some inequalities}\label{sec:ineq}

We will need some elementary inequalities, which we collect
here for convenient reference.

\begin{lemma}\label{sq} For $\mu\in \Complex$
$$|\mu||\Im(\mu)|\leq \dist(\mu^2,[0,\infty))\leq 2|\mu| |\Im(\mu)|.$$
\end{lemma}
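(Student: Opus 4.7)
The plan is to prove this by a direct, elementary calculation. Writing $\mu = a+ib$ with $a,b\in\mathbb{R}$, I would first expand $\mu^2 = (a^2-b^2) + 2iab$, so that $\Re(\mu^2) = a^2 - b^2$ and $\Im(\mu^2) = 2ab$. The distance from a complex number $z = x+iy$ to $[0,\infty)$ has a simple closed form: it equals $|y|$ when $x \geq 0$ and equals $|z| = \sqrt{x^2+y^2}$ when $x<0$. Applied to $z = \mu^2$, this splits the problem into two cases according to the sign of $a^2 - b^2$.

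In the case $a^2 \geq b^2$, one has $\dist(\mu^2,[0,\infty)) = 2|ab|$. The upper bound $2|ab| \leq 2|\mu||\Im(\mu)|$ is immediate from $|a| \leq \sqrt{a^2+b^2} = |\mu|$. For the lower bound $|\mu||\Im(\mu)| = |b|\sqrt{a^2+b^2} \leq 2|ab|$, one uses $b^2 \leq a^2$ to conclude $a^2 + b^2 \leq 2a^2$, giving $\sqrt{a^2+b^2} \leq \sqrt{2}|a| \leq 2|a|$, which yields the required inequality after multiplying by $|b|$.

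In the complementary case $a^2 < b^2$, one has $\Re(\mu^2) < 0$, hence $\dist(\mu^2,[0,\infty)) = |\mu^2| = a^2 + b^2$. The lower bound reduces to $|b|\sqrt{a^2+b^2} \leq a^2+b^2$, i.e.\ $|b| \leq |\mu|$, which is trivial. The upper bound $a^2+b^2 \leq 2|\mu||b|$ reduces to $\sqrt{a^2+b^2} \leq 2|b|$, i.e.\ $a^2 \leq 3b^2$, which follows from $a^2 < b^2$.

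There is no real obstacle here; the only thing to keep organised is the case split on the sign of $a^2-b^2$, which corresponds exactly to whether $\mu^2$ lies in the right or left half-plane. Both the upper and lower bounds are then one-line elementary estimates, and the constant $2$ on the right is sharp as can be seen by letting $|a| = |b|$ (both cases limit to $\dist(\mu^2,[0,\infty)) = 2b^2 = \sqrt{2}|\mu||b|$, so the factor cannot be improved below $\sqrt{2}$, but $2$ suffices for later use).
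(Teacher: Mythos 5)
Your proof is correct and takes essentially the same approach as the paper: both split on the sign of $\Re(\mu^2)=a^2-b^2$, use the closed-form expression for $\dist(\cdot,[0,\infty))$ in each half-plane, and finish with elementary estimates; writing things out in coordinates $a,b$ is purely cosmetic. (One small remark: the optimal constant on the right is $2$, approached as $|a|/|b|\to\infty$, not $\sqrt{2}$; the case $|a|=|b|$ is the worst case for the \emph{lower} bound, where the paper indeed obtains the sharper $\sqrt{2}|\mu||\Im(\mu)|\leq\dist(\mu^2,[0,\infty))$.)
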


\begin{proof}
If $\Re(\mu^2)>0$ then $|\Re(\mu)|>|\Im(\mu)|$ and we have
$$\dist(\mu^2,[0,\infty))=|\Im(\mu^2)|=2|\Re(\mu)||\Im(\mu)|\leq 2|\mu||\Im(\mu)|,$$
$$\dist(\mu^2,[0,\infty))=|\Im(\mu^2)|=2|\Re(\mu)||\Im(\mu)|\geq \sqrt{2}|\mu||\Im(\mu)|.$$
If $\Re(\mu^2)\leq 0$ then $|\Re(\mu)|\leq |\Im(\mu)|$ and we have
$$\dist(\mu^2,[0,\infty))=|\mu|^2=\Re(\mu)^2+\Im(\mu)^2\leq 2\Im(\mu)^2\leq 2|\Im(\mu)||\mu|,$$
$$\dist(\mu^2,[0,\infty))=|\mu|^2\geq |\mu||\Im(\mu)|.$$
Taking the worst-case scenarios we get the result.
\end{proof}

For $a>0$, we define the conformal map $\phi_a :\mU\rightarrow
\Complex\setminus [0,\infty)$ by
\begin{equation}\label{l}\phi_a(z)=-a^2\left(\frac{z+1}{z-1}\right)^2.\end{equation}
Note that $\phi_a(0)=-a^2$.

\begin{lemma}\label{ese2} For $a>0$ and $\lambda\in \Complex\setminus
[0,\infty)$, the following holds
\begin{equation*}
\frac a 2
\frac{\dist(\lambda,[0,\infty))}{|\lambda|^{\frac{1}{2}}(|\lambda|+a^2)}\leq
1-|\phi_a^{-1}(\lambda)| \leq 4a
\frac{\dist(\lambda,[0,\infty))}{|\lambda|^{\frac{1}{2}}(|\lambda|+a^2)},
\end{equation*}
\begin{equation*}
\frac{\sqrt{2}a}{(|\lambda|+a^2)^{\frac{1}{2}}}\leq
|\phi_a^{-1}(\lambda)-1|\leq
\frac{2a}{(|\lambda|+a^2)^{\frac{1}{2}}}, \end{equation*}
\begin{equation*}
\frac{\sqrt{2}|\lambda
|^{\frac{1}{2}}}{(|\lambda|+a^2)^{\frac{1}{2}}}\leq
|\phi_a^{-1}(\lambda)+1|\leq \frac{2|\lambda
|^{\frac{1}{2}}}{(|\lambda|+a^2)^{\frac{1}{2}}}. \end{equation*}

\end{lemma}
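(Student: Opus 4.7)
The plan is to work with an explicit formula for $\phi_a^{-1}$ and reduce each of the three double inequalities to elementary estimates, using Lemma \ref{sq} for the first one.

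First, I would invert $\phi_a$ explicitly. Setting $\lambda = \mu^2$ with $\mu$ chosen so that $\mathrm{Im}(\mu) > 0$ (which is well defined for $\lambda \in \mc \setminus [0,\infty)$), a direct calculation shows
\[
\phi_a^{-1}(\lambda) = \frac{\mu - ia}{\mu + ia};
\]
the choice of branch is forced by the requirement $|\phi_a^{-1}(\lambda)| < 1$, since $|\mu - ia|^2 - |\mu + ia|^2 = -4a\,\mathrm{Im}(\mu)$. From this formula the two simpler quantities fall out immediately:
\[
|\phi_a^{-1}(\lambda) - 1| = \frac{2a}{|\mu + ia|}, \qquad |\phi_a^{-1}(\lambda) + 1| = \frac{2|\mu|}{|\mu + ia|} = \frac{2|\lambda|^{1/2}}{|\mu+ia|}.
\]

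The key auxiliary estimate is a two-sided bound on $|\mu + ia|^2 = |\mu|^2 + a^2 + 2a\,\mathrm{Im}(\mu)$. Since $0 \leq \mathrm{Im}(\mu) \leq |\mu|$ and $|\mu|^2 = |\lambda|$, I get
\[
|\lambda| + a^2 \;\leq\; |\mu + ia|^2 \;\leq\; (|\mu| + a)^2 \;\leq\; 2(|\lambda| + a^2).
\]
Plugging this into the two formulas above yields the second and third inequalities of the lemma directly.

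For the first inequality, I would write
\[
1 - |\phi_a^{-1}(\lambda)| = \frac{1 - |\phi_a^{-1}(\lambda)|^2}{1 + |\phi_a^{-1}(\lambda)|}, \qquad 1 \leq 1 + |\phi_a^{-1}(\lambda)| \leq 2,
\]
and use the identity $|\mu + ia|^2 - |\mu - ia|^2 = 4a\,\mathrm{Im}(\mu)$ to obtain
\[
\frac{2a\,\mathrm{Im}(\mu)}{|\mu+ia|^2} \;\leq\; 1 - |\phi_a^{-1}(\lambda)| \;\leq\; \frac{4a\,\mathrm{Im}(\mu)}{|\mu+ia|^2}.
\]
Then I combine this with the bounds on $|\mu+ia|^2$ from the previous step, and invoke Lemma \ref{sq}, which gives $\mathrm{Im}(\mu) \asymp \dist(\lambda,[0,\infty))/|\lambda|^{1/2}$ (up to the constants $1$ and $1/2$), to produce the stated bounds with constants $a/2$ and $4a$.

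No step is a real obstacle; the only things requiring care are tracking the constants through the two-sided estimates and confirming the correct branch of the square root so that $\phi_a^{-1}(\lambda)$ lies in $\mU$. Everything else is routine algebraic manipulation and the two invocations of Lemma \ref{sq} and of $|\mu|^2 = |\lambda|$.
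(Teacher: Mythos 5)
Your proposal is correct, and it is exactly the "standard computation" the paper delegates to Pommerenke's Corollary 1.4 without writing out: you invert $\phi_a$ via $\lambda=\mu^2$, $\Im\mu>0$, obtaining $\phi_a^{-1}(\lambda)=\frac{\mu-ia}{\mu+ia}$, use the two-sided bound $|\lambda|+a^2\le|\mu+ia|^2\le 2(|\lambda|+a^2)$, and then for the first inequality write $1-|z|=(1-|z|^2)/(1+|z|)$ together with $1-|\phi_a^{-1}(\lambda)|^2=4a\,\Im(\mu)/|\mu+ia|^2$ and Lemma \ref{sq}; the constants $a/2$, $4a$, $\sqrt 2$, $2$ all come out exactly as stated. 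The only cosmetic point is that $\Im(\mu)>0$ strictly (not just $\ge 0$) on $\mc\setminus[0,\infty)$, but your chain of inequalities uses only $\Im(\mu)\ge 0$ and $\Im(\mu)\le|\mu|$, so this changes nothing.
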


\begin{proof}
This is a standard computation, see e.g. Corollary 1.4 in \cite{Pommerenke}.
\end{proof}

\section{Proof of the eigenvalue inequalities for general operators}\label{proof}

The following Lemma, which is of independent interest, is the main
ingredient in the proof of Theorem \ref{new1}, provided in Section
\ref{proof_ab}.
\begin{lemma} \label{mainlemma}
Let $H_0$ be selfadjoint with $\sigma(H_0)=[0,\infty)$, $H=H_0+M$,
where $M$ satisfies (\ref{aa1}) for some $p > 0$, and $N(H) \subset
\mh_{\omega_0}$. For $a>0$ with $a^2>\omega_0$ and $\mu\in \Complex$
with $\Im(\mu)>0$ assume that
\begin{equation}\label{be}
\|[a^2+H]^{-1}M[\mu^2-H_0]^{-1}\|_{\mathbf{S}_p}^p\leq K_1
\frac{|\mu+ia|^{\delta}}{|\Im(\mu)|^{\alpha}|\mu|^\nu},
\end{equation}
where $\alpha, \delta, \nu \geq 0$ and $K_1 > 0$. For $0 < \tau < 1 $
let $\rho, \eta_1, \eta_2$ be defined as in (\ref{variables}), and
let
\begin{equation}
\eta_0=\frac{1}{2}(\rho -1 + \tau)_+.\label{eq:mainlemma}
\end{equation}
Then
\begin{eqnarray}\label{ff4}
\sum_{\lambda\in
\sigma_{d}(H)}\frac{\dist(\lambda,[0,\infty))^{2\eta_1}}{|\lambda|^{\eta_1
- \eta_2}(a^2+ |\lambda|)^{\eta_0+2\eta_1 + \eta_2} } &\leq&
C(p,\alpha,\delta,\nu,\tau) \frac{ K_1}{
a^{2\eta_0+2\eta_1-\alpha-\rho}}.
\end{eqnarray}
\end{lemma}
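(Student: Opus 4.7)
The plan is to encode the discrete eigenvalues of $H$ as zeros of a holomorphic function on the unit disk $\mU$, control its growth near the boundary using (\ref{be}), apply Theorem \ref{borichev} of Borichev--Golinskii--Kupin, and translate the resulting bound back to the $\lambda$-plane via Lemma \ref{ese2}. For $\mu$ in the upper half-plane, the function of choice is the regularised perturbation determinant
\[
d(\mu) := {\det}_{\lceil p\rceil}\bigl(I - (\mu^2+a^2)(a^2+H)^{-1}M(\mu^2-H_0)^{-1}\bigr),
\]
which is well-defined and holomorphic by (\ref{be}). Using the identity $(a^2+H)^{-1}M = I - (a^2+H)^{-1}(a^2+H_0)$ together with the commutativity of functions of $H_0$, a direct computation yields the factorisation
\[
I - (\mu^2+a^2)(a^2+H)^{-1}M(\mu^2-H_0)^{-1} = (\mu^2-H)(\mu^2-H_0)^{-1}\cdot \tilde V(\mu),
\]
where $\tilde V(\mu) = (\mu^2-H_0)(a^2+H)^{-1}(a^2+H_0)(\mu^2-H_0)^{-1}$ is a similarity of the invertible operator $V = (a^2+H)^{-1}(a^2+H_0) = I - (a^2+H)^{-1}M$, and so $\det_{\lceil p\rceil}(\tilde V(\mu))$ is nowhere vanishing. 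The first factor is the standard Birman--Schwinger operator $I - M(\mu^2-H_0)^{-1}$, so by the usual perturbation determinant theory (Weyl's theorem plus \cite{Gohberg,simonb}) the zeros of $d$ coincide with those $\mu$ in the upper half-plane for which $\mu^2\in\sigma_d(H)$, with order equal to the algebraic multiplicity.

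I then pull $d$ back to $\mU$ via $\mu(z) = -ia(z+1)/(z-1)$, setting $h(z) := d(\mu(z))$. The normalisation $h(0)=1$ is automatic since $\mu(0)=ia$ makes $\mu^2+a^2$ vanish, reducing the determinant to $\det_{\lceil p\rceil}(I)=1$. Combining (\ref{inequality}) with (\ref{be}) and the elementary identities (immediate from the formula for $\mu(z)$)
\[
|\mu+ia|=\tfrac{2a}{|z-1|},\quad |\mu|=\tfrac{a|z+1|}{|z-1|},\quad |\Im\mu|=\tfrac{a(1-|z|^2)}{|z-1|^2},\quad |\mu^2+a^2|=\tfrac{4a^2|z|}{|z-1|^2},
\]
I obtain, after absorbing $|z|^p\leq 1$ and (when $\rho<0$) replacing $|z-1|^{-\rho}$ by $|z-1|^{-\rho_+}$ up to a constant, the growth estimate
\[
\log|h(z)| \leq C(p,\alpha,\delta,\nu)\,K_1\,a^{\alpha+\rho}\,\frac{1}{(1-|z|)^{\alpha}|z-1|^{\rho_+}|z+1|^{\nu}},
\]
precisely of the form required by Theorem \ref{borichev} with $\xi_1=1$, $\xi_2=-1$, $\beta_1=\rho_+$, $\beta_2=\nu$.

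Applying Theorem \ref{borichev} with the parameter $\tau$, and then translating back via Lemma \ref{ese2} (which gives $1-|z_k|\asymp a\,\dist(\lambda_k,[0,\infty))/[|\lambda_k|^{1/2}(|\lambda_k|+a^2)]$, $|z_k-1|\asymp a/(|\lambda_k|+a^2)^{1/2}$, $|z_k+1|\asymp|\lambda_k|^{1/2}/(|\lambda_k|+a^2)^{1/2}$), together with the identifications $2\eta_1=\alpha+1+\tau$, $2\eta_0=(\rho-1+\tau)_+$, $2\eta_2=(\nu-1+\tau)_+$, one accumulates a factor $a^{2\eta_0+2\eta_1}$ on the left-hand side (from $(1-|z_k|)^{2\eta_1}$ and $|z_k-1|^{2\eta_0}$) against $a^{\alpha+\rho}$ on the right, and rearranging delivers exactly (\ref{ff4}).

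The main obstacle is the construction and analysis of $d(\mu)$: the prefactor $(\mu^2+a^2)$ must perform a triple role, ensuring that the Schatten bound (\ref{be}) can be applied directly to control $\log|h|$, making $h(0)=1$ hold automatically by vanishing at $\mu=ia$, and producing the clean factorisation above which in turn matches multiplicities with the standard Birman--Schwinger determinant. Once this setup is in place, the remainder is a careful book-keeping of exponents and powers of $a$ through the conformal change of variables.
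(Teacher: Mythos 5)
Your proposal is correct and follows essentially the same route as the paper: construct the regularised determinant $d(\mu)={\det}_{\lceil p\rceil}\bigl(I-(\mu^2+a^2)(a^2+H)^{-1}M(\mu^2-H_0)^{-1}\bigr)$ (which equals the paper's $f(\phi_a(z))=h(z)$ after the identification $\mu=ia(1+z)/(1-z)$, $\mu^2=\phi_a(z)$), verify the normalisation at $z=0$, establish the growth bound via (\ref{inequality}) and the conformal identities, apply Theorem~\ref{borichev} with $\xi_1=1$, $\xi_2=-1$, $\beta_1=\rho_+$, $\beta_2=\nu$, and translate back with Lemma~\ref{ese2}. Your factorisation $I-F(\mu^2)=(I-M(\mu^2-H_0)^{-1})\tilde V(\mu)$ is algebraically identical to the paper's resolvent-identity factorisation and the bookkeeping of the power $a^{2\eta_0+2\eta_1-\alpha-\rho}$ matches (\ref{ff4}).
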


\subsection{Proof of Lemma \ref{mainlemma}}

We start with the construction of a holomorphic function
$f:\Complex\setminus [0,\infty) \rightarrow \Complex$ whose zeros
coincide with the eigenvalues of $H$ in $\mc \setminus [0,\infty)$.

\subsubsection{The function $f(\lambda)$}

To begin with, we note that the resolvent-identity
$$[a^2+H_0]^{-1}-[a^2+H]^{-1}=[a^2+H]^{-1}M[a^2+H_0]^{-1}$$
implies that
\begin{eqnarray} &&I-(\lambda+a^2)[a^2+H]^{-1} \nonumber \\
&=& I-(\lambda +
a^2)[a^2+H_0]^{-1}+(\lambda+a^2)[a^2+H]^{-1}M[a^2+H_0]^{-1}.
\label{e1}
\end{eqnarray}
Assuming that $\lambda \notin [0,\infty)$ and multiplying both sides
of (\ref{e1}) from the right by
$[I-(\lambda+a^2)[a^2+H_0]^{-1}]^{-1}$ we obtain
\begin{eqnarray}\label{a12}&&\left[I-(\lambda+a^2)[a^2+H]^{-1}\right] \left[I-(\lambda+a^2)[a^2+H_0]^{-1}\right]^{-1} \\
&=&I+(\lambda+a^2)[a^2+H]^{-1}M[a^2+H_0]^{-1} \left[I-(\lambda+a^2)[a^2+H_0]^{-1}\right]^{-1} \nonumber\\
&=&I-(\lambda+a^2)[a^2+H]^{-1}M[\lambda-H_0]^{-1}.\nonumber\end{eqnarray}
Note that the LHS of (\ref{a12}) is invertible if and only if
$I-(\lambda+a^2)[a^2+H]^{-1}$ is invertible, which is the case if
and only if $\lambda \not \in \; \sigma_{d}(H)$. Therefore, defining
\begin{equation}\label{F}F(\lambda)=(\lambda+a^2)[a^2+H]^{-1}M[\lambda-H_0]^{-1},\end{equation} it
follows that \begin{equation}\label{dd}\lambda\in
\sigma_{d}(H)\;\;\Leftrightarrow \;\;I-F(\lambda) {\mbox{  is not
invertible}}.\end{equation} $F(\lambda)$ is an operator-valued
function defined on $\Complex\setminus [0,\infty)$, and by
assumption (\ref{aa1}) we have $F(\lambda)\in {\mathbf{S}}_p$.
Hence, (\ref{dd}) can be rewritten as
$$\lambda\in
\sigma_{d}(H) \quad \Leftrightarrow \quad {\det}_{\lceil p
\rceil}(I-F(\lambda))=0.$$
Defining $f(\lambda)={\det}_{\lceil p \rceil}(I-F(\lambda))$, we
obtain that $f$ is holomorphic on $\Complex\setminus [0,\infty)$ and
\begin{equation}\label{z}
\sigma_{d}(H)=\{ \lambda\in \Complex\setminus [0,\infty)\;|\;
f(\lambda)=0 \}.
\end{equation}
Moreover, $F(-a^2)=0$ implies that
\begin{equation}\label{o}f(-a^2)=1.\end{equation}
It should be noted that
$$F(\lambda)= [(\lambda+ a^2)^{-1}-[a^2+H_0]^{-1}]^{-1}([a^2+H]^{-1}-[a^2+H_0]^{-1}),$$
providing the alternative representation
$$f(\lambda) = {\det}_{\lceil p \rceil}(I-[(\lambda+ a^2)^{-1}-[a^2+H_0]^{-1}]^{-1}([a^2+H]^{-1}-[a^2+H_0]^{-1})).$$
This shows that $f$ is the $\lceil p \rceil$-regularized perturbation
determinant of $[a^2+ H]^{-1}$ with respect to $[a^2+H_0]^{-1}$ as
defined in Subsection \ref{schatten}. Together with the spectral
mapping theorem this implies that the order of $\lambda_0$ as a zero
of $f$ coincides with its algebraic multiplicity as an eigenvalue of
$H$.\\

We conclude this subsection  with the following bound on
$f(\lambda)$.

\begin{lemma}\label{bound} Assume (\ref{be}).
Then for all $\mu\in \Complex$ with $\Im(\mu)>0$
\begin{equation}\label{ee11}\log |f(\mu^2)|\leq \Gamma_p K_1
\frac{ |\mu-ia|^p |\mu+ia|^{\delta+p}}{|\Im(\mu)|^{\alpha}|\mu|^\nu}
\end{equation}
\end{lemma}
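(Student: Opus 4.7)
The plan is to combine the explicit formula for $f$ with the standard determinant inequality \eqref{inequality}, reducing everything to bounding the $\mathbf{S}_p$-norm of $F(\mu^2)$.

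First I would recall from the definition that
\[
f(\mu^2) = {\det}_{\lceil p\rceil}\bigl(I - F(\mu^2)\bigr),
\qquad
F(\mu^2) = (\mu^2+a^2)\,[a^2+H]^{-1}\,M\,[\mu^2-H_0]^{-1},
\]
and that $F(\mu^2) \in \mathbf{S}_p$ by assumption \eqref{aa1}. The inequality \eqref{inequality} from Subsection \ref{schatten} then gives
\[
\log|f(\mu^2)| \;\le\; \Gamma_p\,\|F(\mu^2)\|_{\mathbf{S}_p}^p,
\]
so the task reduces to controlling $\|F(\mu^2)\|_{\mathbf{S}_p}^p$.

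Next I would pull the scalar factor $(\mu^2+a^2)$ outside the Schatten norm, so that
\[
\|F(\mu^2)\|_{\mathbf{S}_p}^p
= |\mu^2+a^2|^p \cdot \bigl\|[a^2+H]^{-1} M [\mu^2-H_0]^{-1}\bigr\|_{\mathbf{S}_p}^p,
\]
and then factor $\mu^2+a^2 = (\mu-ia)(\mu+ia)$, which yields $|\mu^2+a^2|^p = |\mu-ia|^p|\mu+ia|^p$. Inserting the hypothesis \eqref{be} to control the remaining operator norm produces exactly the bound
\[
\|F(\mu^2)\|_{\mathbf{S}_p}^p
\;\le\; K_1\,\frac{|\mu-ia|^p\,|\mu+ia|^{\delta+p}}{|\Im(\mu)|^{\alpha}|\mu|^{\nu}},
\]
and applying the determinant inequality above finishes the proof.

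There is essentially no obstacle here — the lemma is a direct algebraic combination of the determinant inequality, the factorisation of $\mu^2+a^2$, and hypothesis \eqref{be}. The only care needed is to verify that $F(\mu^2)$ is well-defined for $\mu$ with $\Im(\mu)>0$ (so that $\mu^2 \notin [0,\infty)$, making $[\mu^2-H_0]^{-1}$ bounded) and that $a^2>\omega_0$ ensures $[a^2+H]^{-1}$ is bounded; both are guaranteed by the standing hypotheses of Lemma \ref{mainlemma}.
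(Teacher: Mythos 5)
Your proof is correct and follows exactly the paper's route: apply the determinant inequality \eqref{inequality} to reduce to bounding $\|F(\mu^2)\|_{\mathbf{S}_p}^p$, pull out the scalar $|\mu^2+a^2|^p$, invoke hypothesis \eqref{be}, and factor $\mu^2+a^2=(\mu-ia)(\mu+ia)$. The paper leaves the factorisation implicit in passing from $|\mu^2+a^2|^p|\mu+ia|^\delta$ to $|\mu-ia|^p|\mu+ia|^{\delta+p}$; you spell it out, but otherwise the arguments coincide.
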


\begin{proof}
Using (\ref{F}) and (\ref{be}), we have
$$\|F(\mu^2)\|_{\mathbf{S}_p}^p\leq |\mu^2+a^2|^p\|[a^2+H]^{-1}M[\mu^2-H_0]^{-1}\|_{\mathbf{S}_p}^p
\leq   K_1
\frac{|\mu^2+a^2|^p|\mu+ia|^{\delta}}{|\Im(\mu)|^{\alpha}|\mu|^\nu}
,$$ and the result follows by (\ref{inequality}).
\end{proof}

In the sequel, we want to study the zeros of $f(\lambda)$. Since our
tool will be a theorem on zeros of a holomorphic function in the
unit disk $\mU$, we have to transform the problem from
$\Complex\setminus [0,\infty)$ to $\mU$.

\subsubsection{The function $h(z)$}

Recall the conformal map $\phi_a :\mU\rightarrow \Complex\setminus
[0,\infty)$ given by (\ref{l}), and define $h:\mU\rightarrow
\Complex$ by
$$h(z)=f(\phi_a(z)).$$
Then $h$ is holomorphic in the unit disk, and (\ref{z}) implies that
\begin{equation}\label{si}\sigma_{d}(H)=\{\phi_a(z)\;|\; z\in \mU,\;h(z)=0\}.\end{equation} By (\ref{o}) we
have
\begin{equation*}\label{o1} h(0)=1.\end{equation*}
The bound on $f$ provided by Lemma \ref{bound} is now translated
into a bound on $h$.

\begin{lemma}\label{bh} Assume (\ref{be}). Then for all $z\in \mU$
\begin{equation*}\label{ee2}\log|h(z)|\leq C(p,\delta) K_1 a^{\alpha+\rho}
 \frac{|z|^{p}}{(1-|z|)^{\alpha}|z+1|^\nu|z-1|^{\rho}},\end{equation*} where
$\rho$ was defined in (\ref{variables}).
\end{lemma}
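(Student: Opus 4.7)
The plan is a direct computation: substitute $\lambda = \phi_a(z)$ into the bound of Lemma \ref{bound} and simplify using the explicit form of the conformal map. The factor $a^{\alpha+\rho}$ and the exponents on $|z-1|$, $|z+1|$, $(1-|z|)$, $|z|$ in the claimed bound are what one gets once all the algebra is carried out; I do not expect any genuine obstacle.

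First I would choose the right branch of the square root. Writing $\phi_a(z) = \bigl(-ia\tfrac{z+1}{z-1}\bigr)^2$, set $\mu = -ia\tfrac{z+1}{z-1}$, so that $\mu^2 = \phi_a(z)$. A short computation gives
\[
\Re\!\left(\frac{z+1}{z-1}\right) = \frac{|z|^2-1}{|z-1|^2},
\]
so $\Im(\mu) = a\tfrac{1-|z|^2}{|z-1|^2} > 0$ for $z\in\mU$, which makes Lemma \ref{bound} applicable at this $\mu$.

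Next I would compute the four factors appearing in the right-hand side of (\ref{ee11}) in terms of $z$. Direct algebra gives
\[
|\mu - ia| = \frac{2a\,|z|}{|z-1|},\qquad |\mu + ia| = \frac{2a}{|z-1|},
\]
\[
|\mu| = a\,\frac{|z+1|}{|z-1|},\qquad |\Im(\mu)| = a\,\frac{(1-|z|)(1+|z|)}{|z-1|^{2}}.
\]
Substituting these into the bound of Lemma \ref{bound} and collecting powers of $a$ and of $|z-1|$, the exponent of $a$ becomes
\[
p + (\delta+p) - \alpha - \nu = 2p+\delta-\alpha-\nu = \alpha + \bigl(\delta + 2(p-\alpha) - \nu\bigr) = \alpha + \rho,
\]
which is exactly the power predicted. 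Similarly, the exponent of $|z-1|$ becomes $-(2p+\delta) + 2\alpha + \nu = -\rho$, again matching the claim.

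Finally, I would dispose of the harmless factor $(1+|z|)^{\alpha}$ in the denominator by noting $1+|z|\ge 1$ on $\mU$, which absorbs into an overall constant (or, alternatively, note $1+|z| \in [1,2]$ and absorb the $2^\alpha$). The numerical factor $2^{2p+\delta}$ coming from the powers of $2a$ in $|\mu\mp ia|$, combined with the constant $\Gamma_p$ from (\ref{inequality}), produces the constant $C(p,\delta)$. Putting everything together yields
\[
\log|h(z)| \le C(p,\delta)\,K_{1}\,a^{\alpha+\rho}\,\frac{|z|^{p}}{(1-|z|)^{\alpha}\,|z+1|^{\nu}\,|z-1|^{\rho}},
\]
which is the claim of Lemma \ref{bh}.
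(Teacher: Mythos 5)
Your proof is correct and takes essentially the same approach as the paper: set $\mu = ia\frac{1+z}{1-z}$ (equivalently $-ia\frac{z+1}{z-1}$), compute $|\mu\pm ia|$, $|\mu|$, $|\Im(\mu)|$ in terms of $z$, substitute into Lemma \ref{bound}, and drop the harmless factor $(1+|z|)^{\alpha}\ge 1$ from the denominator. The bookkeeping on the powers of $a$ and $|z-1|$ matches the paper's computation exactly.
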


\begin{proof}
Set $\mu=ia\frac{1+z}{1-z}$ and note that $\Im(\mu)>0$. Then by
Lemma \ref{bound} \begin{equation}\label{io}\log|h(z)|=\log
|f(\mu^2)|\leq \Gamma_p K_1
\frac{|\mu-ia|^p|\mu+ia|^{\delta+p}}{|\Im(\mu)|^{\alpha}|\mu|^\nu}.\end{equation}
Since
$$|\mu+ia|=\frac{2a}{|z-1|},\;\; |\mu-ia|=\frac{2a|z|}{|z-1|}\;\; \text{ and } \;\; \frac{1}{|\Im(\mu)|} \leq \frac{|1-z|^2}{a(1-|z|)},$$


\noindent we obtain
 $$\frac{|\mu-ia|^p|\mu+ia|^{\delta+p}}{|\Im(\mu)|^{\alpha}|\mu|^\nu}\leq
 2^{\delta+2p} a^{\delta+2p-\alpha-\nu}
 \frac{|z|^{p}|z-1|^{2\alpha+\nu-\delta-2p}}{(1-|z|)^{\alpha}|z+1|^\nu},$$
 which together with (\ref{io}) concludes the proof.
\end{proof}

We are now in a position to apply Theorem \ref{borichev}, the result
by Borichev, Golinskii and Kupin. Since
$\rho=\delta+2(p-\alpha)-\nu$ can be negative, Lemma \ref{bh}
implies that
$$ \log|h(z)|\leq C(p,\alpha,\delta, \nu) K_1 a^{\alpha+\rho} \frac{|z|^{p}}{(1-|z|)^{\alpha}|z-1|^{\rho_+}|z+1|^\nu}.$$
Applying  Theorem \ref{borichev} with $N=2, \xi_1=1, \xi_2=-1,
\beta_1= \rho_+, \beta_2=\nu$ and $K= C(p,\alpha,\delta, \nu)
K_1a^{\alpha+\rho}$ we obtain, for $0 < \tau < 1$,
\begin{eqnarray}\label{sc3}
&&\sum_{h(z)=0, z \in \mU}
(1-|z|)^{\alpha+1+\tau}|z-1|^{(\rho-1+\tau)_+}|z+1|^{(\nu-1+\tau)_+} \nonumber \\
&\leq& C(p,\alpha,\delta,\nu,\tau)K_1 a^{\alpha+\rho}.
\end{eqnarray}
Here it was used that $(\rho_+-1+\tau)_+=(\rho-1+\tau)_+$ for $0 <
\tau  < 1$.

Recalling the definition of $\eta_0, \eta_1$ and $\eta_2$ (see
(\ref{variables}) and (\ref{eq:mainlemma})),
inequality (\ref{sc3}) can be rewritten as follows
\begin{equation}\label{sc4}
\sum_{h(z)=0, z \in \mU}
(1-|z|)^{2\eta_1}|z+1|^{2\eta_2}|z-1|^{2\eta_0} \leq
C(p,\alpha,\delta,\nu,\tau)K_1 a^{\alpha+\rho}.
\end{equation}

\subsubsection{Back to the eigenvalues}

It remains to retranslate the bound obtained in (\ref{sc4}) into a
bound on the eigenvalues of $H$. Using the inequalities derived in
Subsection \ref{sec:ineq} this is straightforward. From (\ref{si})
and (\ref{sc4}) we obtain
\begin{eqnarray}\label{sc7}
&& \sum_{\lambda\in\:\sigma_{d}(H)}
(1-|\phi_a^{-1}(\lambda)|)^{2\eta_1}
|\phi_a^{-1}(\lambda)+1|^{2\eta_2} |\phi_a^{-1}(\lambda)-1|^{2\eta_0} \\
&\leq& C(p,\alpha,\delta,\nu,\tau)K_1 a^{\alpha+\rho} \nonumber
\end{eqnarray}
and, using Lemma \ref{ese2}, the sum on the left-hand side of (\ref{sc7}) can be bounded from below by
\begin{equation}\label{f2}
C(p,\alpha,\delta,\nu,\tau)a^{2\eta_0 + 2\eta_1} \sum_{\lambda \in \sigma_d(H)}
\frac{\dist(\lambda,[0,\infty))^{2\eta_1}}{|\lambda|^{\eta_1-\eta_2}}
\frac{1}{(|\lambda|+a^2)^{\eta_0+2\eta_1+\eta_2}}.
\end{equation}
(\ref{sc7}) and (\ref{f2})  imply (\ref{ff4}). We have thus completed the proof of
Lemma \ref{mainlemma}.

\subsection{Proof of Theorem \ref{new1}}\label{proof_ab}
We will see that Theorem \ref{new1} is a direct consequence of Lemma
\ref{mainlemma}:\\[4pt]
Let $a > 0$ with $a^2 > \omega_0$. Since
$$\|[a^2+H]^{-1}\| \leq \frac 1 {\dist(-a^2,\overline{N(H)})}\leq \frac{1}{a^2-\omega_0},$$
and for $\Im(\mu)>0$
$$|\mu + i| \leq \left(1 + \frac 1 a \right) |\mu + ia| \leq \sqrt{2 \left(1+\frac 1 {a^2}\right)} | \mu + ia|,$$
we obtain from assumption (\ref{be2}) that
\begin{eqnarray*}
  \|[a^2+H]^{-1}M[\mu^2-H_0]^{-1}\|_{\mathbf{S}_p}^p &\leq&
\frac{2^{\frac \delta 2}K_0
(1+\frac{1}{a^2})^{\frac{\delta}{2}}}{(a^2-\omega_0)^{p}}
\frac{|\mu+ia|^{\delta}}{|\Im(\mu)|^{\alpha}|\mu|^\nu}.
\end{eqnarray*}
Hence, an application of Lemma \ref{mainlemma} shows that
\begin{eqnarray*}
\sum_{\lambda\in
\:\sigma_{d}(H)}\frac{\dist(\lambda,[0,\infty))^{2\eta_1}}{|\lambda|^{\eta_1
- \eta_2}(a^2+ |\lambda|)^{\eta_0 + 2\eta_1 + \eta_2} } &\leq&
L \frac{ (a^2+1)^{\frac{\delta}{2}}}{(a^2-\omega_0)^p a^{2\eta_0
+ 2\eta_1-\alpha-\rho+\delta}} ,\end{eqnarray*} where
$L=C(p,\alpha,\delta,\nu,\tau)K_0$. To simplify the notation we set
\begin{eqnarray*}
  b&=&a^2, \\
  \varphi_1&=&\eta_0 + \eta_1-\frac{\alpha+\rho-\delta}{2}+p-1-\tau, \\[3pt]
  \varphi_2&=&\eta_0 + 2\eta_1 +
\eta_2.
\end{eqnarray*}
Then, the last inequality is equivalent to
\begin{eqnarray}
\sum_{\lambda\in\:
\sigma_{d}(H)}\frac{\dist(\lambda,[0,\infty))^{2\eta_1}b^{\varphi_1}}{|\lambda|^{\eta_1
- \eta_2}(b+ |\lambda|)^{\varphi_2}(b+1)^{\frac{\delta}{2}} } &\leq&
L \frac{ b^{p-1-\tau}}{(b-\omega_0)^p}. \label{pr1}
\end{eqnarray}
Note that (\ref{pr1}) holds for any $b>\omega_0$, so we may
integrate both sides of (\ref{pr1}) with respect to $b \in
(\omega_0+1,\infty)$. For the RHS, we obtain
\begin{equation}\label{pr2}
\int_{\omega_0+1}^\infty db \; \frac{b^{p-1-\tau}}{(b-\omega_0)^{p}}
\leq 
\tau^{-1}(1+\omega_0)^{p-\tau}.
\end{equation}
Integrating the LHS of (\ref{pr1}), interchanging sum and integral,
it follows that
\begin{eqnarray}\label{d1}&&\int_{\omega_0+1}^\infty db \Big[
\sum_{\lambda\in\:
\sigma_{d}(H)}\frac{\dist(\lambda,[0,\infty))^{2\eta_1}b^{\varphi_1}}{|\lambda|^{\eta_1
- \eta_2}(b+ |\lambda|)^{\varphi_2}(b+1)^{\frac{\delta}{2}}
}\Big]\\&=& \sum_{\lambda\in\:
  \sigma_{d}(H)}\frac{\dist(\lambda,[0,\infty))^{2\eta_1}}{|\lambda|^{\eta_1 - \eta_2}} \int_{\omega_0+1}^\infty
  db \; \frac{b^{\varphi_1}}
  {(b+ |\lambda|)^{\varphi_2}(b+1)^{\frac{\delta}{2}}}.
\nonumber
\end{eqnarray}
The finiteness of the above integral follows from (\ref{pr2}), and
we can bound it from below as follows,
\begin{small}
\begin{eqnarray}
\int_{\omega_0+1}^\infty db \; \frac{b^{\varphi_1}}{(b+
|\lambda|)^{\varphi_2}(b+1)^{\frac{\delta}{2}}}
\geq \frac{C(p,\alpha,\delta,\nu,\tau)}{(\omega_0+1)^{\eta_1+\eta_2-\eta_3}(|\lambda|+1)^{\eta_1+\eta_2-\eta_3}}.
\label{pr3}
\end{eqnarray}
\end{small}
Note that we used the easily verified fact that $\eta_1+\eta_2-\eta_3 > 0$, see definition (\ref{variables}) above.
(\ref{pr1}) to (\ref{pr3}) imply that
\begin{equation*}
\sum_{\lambda\in\:
  \sigma_{d}(H)}\frac{\dist(\lambda,[0,\infty))^{2\eta_1}}{|\lambda|^{\eta_1 - \eta_2}(|\lambda|+1)^{\eta_1 + \eta_2 - \eta_3}}
\leq C(p,\alpha,\delta,\nu,\tau) L
(1+\omega_0)^{p-\tau+\eta_1+\eta_2-\eta_3}.
\end{equation*}
Noting that $p-\tau-\eta_3= \frac{\alpha+\rho}{2}$ concludes the
proof of Theorem \ref{new1}.

\section{Proof of the inequalities for Schr\"odinger operators} \label{schroedinger}

\subsection{Schatten norm bounds}

We intend to prove Theorem \ref{new2} by an application of Theorem \ref{new1}. To this end, some
information on the Schatten norms of $M_V[\mu^2-H_0]^{-1}$ is
needed. This will be dealt with in the following two lemmas.

\begin{lemma}\label{lem_es1} Let $V \in L^p(\mr^d)$, where $d\geq 2$, $p\geq 2$ and $p>\frac{d}{2}$. Then, for $\lambda\in \Complex\setminus [0,\infty)$  with $\Re(\lambda) > 0$,
$$\|M_V[\lambda-H_0]^{-1}\|_{\mathbf{S}_p}^p\leq C(p,d) \|V\|_{L^p}^p \left[\frac{|\Re(\lambda)|^{\frac{d-2}{2}}}{|\Im(\lambda)|^{p-1}}
+\frac{1}{|\Im(\lambda)|^{p-\frac{d}{2}}} \right] $$ and for
$\lambda \in \mc$ with $\Re(\lambda) \leq 0$,
$$ \|M_V[\lambda-H_0]^{-1}\|_{\mathbf{S}_p}^p\leq C(p,d) \|V\|_{L^p}^p \frac{1}{|\lambda|^{p-\frac d 2}}.$$
\end{lemma}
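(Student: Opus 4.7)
My plan is to apply the Kato--Seiler--Simon inequality, which for $p\geq 2$ states that
\[
\|V(X)\,g(-i\nabla)\|_{\mathbf{S}_p}\leq (2\pi)^{-d/p}\|V\|_{L^p(\R^d)}\|g\|_{L^p(\R^d)},
\]
with $g(\xi)=(\lambda-|\xi|^2)^{-1}$. This reduces the claim to estimating $\|g\|_{L^p(\R^d)}^p$. Passing to spherical coordinates and substituting $s=r^2$ converts it into the one-dimensional integral
\[
I(\lambda):=\int_0^\infty \frac{s^{d/2-1}}{|\lambda-s|^p}\,ds,
\]
so the entire task is to give the stated two-regime bound on $I(\lambda)$.

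For $\Re\lambda\leq 0$ I would rescale $s=|\lambda|t$ and exploit that, writing $\lambda=|\lambda|e^{i\theta}$ with $\cos\theta\leq 0$, one has $|e^{i\theta}-t|^2=1-2t\cos\theta+t^2\geq 1+t^2$. The resulting integral $\int_0^\infty t^{d/2-1}(1+t^2)^{-p/2}dt$ converges because $d\geq 2$ (regularity at $0$) and $p>d/2$ (decay at $\infty$), yielding the bound $I(\lambda)\leq C(d,p)|\lambda|^{d/2-p}$ as required.

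For $\Re\lambda>0$ I set $u=\Re\lambda$, $v=\Im\lambda\neq 0$, and $M=\max(u,|v|)$, then split $I(\lambda)$ at $s=2M$. On the inner interval $[0,2M]$ I would use $s^{d/2-1}\leq (2M)^{d/2-1}$ (the exponent being nonnegative since $d\geq 2$) and dominate the remainder by the full-line integral $\int_\R ((s-u)^2+v^2)^{-p/2}\,ds=C(p)|v|^{1-p}$. On the outer tail $[2M,\infty)$ the inequality $s\geq 2u$ gives $|\lambda-s|\geq|s-u|\geq s/2$, and integration of $s^{d/2-1-p}$ (convergent because $p>d/2$) contributes a term of order $M^{d/2-p}$. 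The final step uses the elementary facts $\max(u,|v|)^{(d-2)/2}\leq u^{(d-2)/2}+|v|^{(d-2)/2}$ (for $(d-2)/2\geq 0$) and $M^{d/2-p}\leq |v|^{d/2-p}$ (since $M\geq|v|$ and $d/2-p<0$) to recombine everything into the two surviving terms $u^{(d-2)/2}|v|^{1-p}$ and $|v|^{d/2-p}$ predicted by the lemma.

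The main obstacle is the $\Re\lambda>0$ estimate: a naive split at $s=2u$ produces an unwanted $u^{d/2-p}$ remainder rather than the desired $|v|^{d/2-p}$ one. Using the enlarged splitting point $M=\max(u,|v|)$ ensures the outer tail begins past $|v|$, so that its contribution can be controlled in terms of $|v|$ alone. This careful choice, together with trading a pointwise bound on $s^{d/2-1}$ on $[0,2M]$ for a full $\R$-integration in the variable $s-u$, is precisely what produces the mixed two-term structure appearing in the lemma's statement.
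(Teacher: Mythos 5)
Your proposal is correct. Both you and the paper start from the Kato--Seiler--Simon bound (Theorem 4.1 in Simon's \textit{Trace Ideals}), reducing the claim to estimating $\|(\lambda-|\cdot|^2)^{-1}\|_{L^p}^p$, i.e.\ the radial integral $\int_0^\infty s^{d/2-1}|\lambda-s|^{-p}\,ds$. For $\Re(\lambda)\leq 0$ your rescaling argument is exactly what is needed. For $\Re(\lambda)>0$, however, you take a genuinely different route from the paper: you split the integral at $s=2\max(\Re\lambda,|\Im\lambda|)$, estimate the bounded piece via the full-line integral $\int_{\R}\big((s-\Re\lambda)^2+(\Im\lambda)^2\big)^{-p/2}ds=C(p)|\Im\lambda|^{1-p}$, and control the tail using $|\lambda-s|\geq s/2$ there. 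The paper instead works with $\int_0^\infty r^{d-1}|\lambda-r^2|^{-p}\,dr$ and performs the substitution $r=\sqrt{\Re\lambda-(\Im\lambda)s}$, which converts the denominator into $(\Im\lambda)^p(s^2+1)^{p/2}$ and lets the two-term bound fall out from elementary estimates on $\int_0^\infty (\Re\lambda\pm(\Im\lambda)s)^{(d-2)/2}(s^2+1)^{-p/2}ds$. Both routes are of comparable length and yield the same estimate; your splitting argument is more elementary and self-contained, while the paper's substitution is slicker and makes the final form appear more automatically. Your remark that a naive split at $s=2\Re\lambda$ fails (it would leave a spurious $(\Re\lambda)^{d/2-p}$ tail term) and that the splitting point must be pushed out to $\max(\Re\lambda,|\Im\lambda|)$ is precisely the subtle point, and you handle it correctly.
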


\begin{proof}
Theorem 4.1 in Simon \cite{simonb} implies, for $p\geq 2$,
$$\|M_V[\lambda-H_0]^{-1}\|_{\mathbf{S}_p}^p\leq (2\pi)^{-\frac d p}\|(\lambda-|\:.\:|^2)^{-1}\|_{L^p}^p \|V\|_{L^p}^p.$$
We will show that, for $\lambda\in \Complex\setminus [0,\infty)$
with $\Re(\lambda) > 0$
\begin{equation} \|(\lambda-|\:.\:|^2)^{-1}\|_{L^p}^p\leq C(p,d) \left[ \frac{|\Re(\lambda)|^{\frac{d-2}{2}}}{|\Im(\lambda)|^{p-1}}
+\frac{1}{|\Im(\lambda)|^{p-\frac{d}{2}}} \right]. \label{show1}
\end{equation}
Set $\lambda=\lambda_0+i\lambda_1$, and assume first that $\lambda_0
> 0$. Since $\|(\lambda-|\:.\:|^2)^{-1}\|_{L^p}=
\|(\overline{\lambda}-|\:.\:|^2)^{-1}\|_{L^p}$, it is sufficient to
treat the case $\lambda_1 > 0$.
Making the change of variable $r=\sqrt{\lambda_0-\lambda_1 s}$ we can express $\|(\lambda-|\:.\:|^2)^{-1}\|_{L^p}^p
$ as
\begin{small}
\begin{eqnarray}\label{es1}
C(d)\lambda_1^{1-p}\left[\int_{0}^{\infty}\frac{(\lambda_0+\lambda_1
s)^{\frac{d-2}{2}}}{(s^2+1)^{\frac{p}{2}}}
\; ds+\int_{0}^{\frac{\lambda_0}{\lambda_1}}\frac{(\lambda_0-\lambda_1 s)^{\frac{d-2}{2}}}{(s^2+1)^{\frac{p}{2}}}
\; ds\right].
\end{eqnarray}
\end{small}
For the first integral in (\ref{es1}), we have, using
$(\lambda_0+\lambda_1 s)^{\frac{d-2}{2}}\leq
(2\lambda_0)^{\frac{d-2}{2}}+(2\lambda_1 s)^{\frac{d-2}{2}}$,
\begin{eqnarray}\label{kj}
\int_{0}^{\infty}\frac{(\lambda_0+\lambda_1 s)^{\frac{d-2}{2}}}{(s^2+1)^{\frac{p}{2}}}\;ds
&\leq&C(d,p)[\lambda_0^{\frac{d-2}{2}}+\lambda_1^{\frac{d-2}{2}}].
\end{eqnarray}
Similarly, for the second integral in (\ref{es1}) we obtain
\begin{eqnarray}\label{ll}
\int_{0}^{\frac{\lambda_0}{\lambda_1}}\frac{(\lambda_0-\lambda_1 s)^{\frac{d-2}{2}}}{(s^2+1)^{\frac{p}{2}}}\;
ds
&\leq& \lambda_0^{\frac{d-2}{2}}
\int_{0}^{\infty}\frac{1}{(s^2+1)^{\frac{p}{2}}}
ds=C(p)\lambda_0^{\frac{d-2}{2}}.
\end{eqnarray}
(\ref{es1}), (\ref{kj}) and (\ref{ll}) imply the validity of (\ref{show1}) in case that $\lambda_0 > 0$.
A similar argument shows the validity of (\ref{show1}) in case that $\lambda_0 \leq 0$.


\end{proof}

\begin{lemma}\label{po}
Let $V \in L^p(\mr^d)$, where $d \geq 2$, $p \geq 2$ and $p > \frac d 2 $. Then, for $\mu \in \mc$
with $\Im(\mu)>0$,
\begin{equation}
\|M_V[\mu^2-H_0]^{-1}\|_{\mathbf{S}_p}^p\leq C(p,d) \|V\|_{L^p}^p
\frac{|\mu+i|^\delta}{|\Im(\mu)|^\alpha |\mu|^\nu} \label{eq:po}
\end{equation}
where
$$\nu=p-\frac{d}{2}, \quad \delta=\frac{d}{2}-1,\quad \alpha=p-1.$$
\end{lemma}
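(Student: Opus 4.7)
The plan is to deduce \eqref{eq:po} directly from Lemma \ref{lem_es1} by specialising $\lambda = \mu^2$ and then repackaging the resulting bounds in terms of the quantities $|\Im(\mu)|$, $|\mu|$ and $|\mu+i|$.

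The natural split is the one dictated by Lemma \ref{lem_es1}, namely according to the sign of $\Re(\mu^2)$. In the region $\Re(\mu^2) > 0$ I would use the crude upper bound $|\Re(\mu^2)| \leq |\mu|^2$ together with the lower bound $|\Im(\mu^2)| = \dist(\mu^2,[0,\infty)) \geq |\mu|\,|\Im(\mu)|$ from Lemma \ref{sq}; feeding these into the two summands of Lemma \ref{lem_es1} yields bounds of the form $|\mu|^{d-p-1}/|\Im(\mu)|^{p-1}$ and $1/(|\mu|^{p-d/2}|\Im(\mu)|^{p-d/2})$, respectively. In the region $\Re(\mu^2) \leq 0$, Lemma \ref{lem_es1} furnishes $1/|\mu|^{2p-d}$, and the relation $|\mu| \leq \sqrt{2}\,|\Im(\mu)|$ (which holds because $\Re(\mu^2)\le 0$ forces $|\Re(\mu)|\le |\Im(\mu)|$) allows me to trade a factor of $|\mu|^{p-1}$ for $|\Im(\mu)|^{p-1}$ up to a constant.

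To bring each of these expressions into the target form $|\mu+i|^{d/2-1}/(|\Im(\mu)|^{p-1}|\mu|^{p-d/2})$, I would use the two elementary inequalities
\[
|\mu| \leq 2|\mu+i|, \qquad |\Im(\mu)| \leq |\mu+i|,
\]
both valid because $\Im(\mu) > 0$ forces $|\mu+i| \geq 1 + \Im(\mu) \geq 1$. A short exponent rearrangement shows that the leftover factor in each of the three bounds above is either $|\mu|^{d/2-1}$ or $|\Im(\mu)|^{d/2-1}$, and these can then be absorbed into $|\mu+i|^{d/2-1}$ via the preceding inequalities, using crucially that $d \geq 2$ so the exponent $d/2-1$ is non-negative.

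There is no real obstacle: the proof is purely algebraic bookkeeping of exponents. The only points at which the hypotheses are genuinely used, beyond Lemma \ref{lem_es1} itself, are the invocation of Lemma \ref{sq} to get the $|\mu|\,|\Im(\mu)|$ lower bound on $|\Im(\mu^2)|$, and the final absorption step, where $d\geq 2$ (for non-negativity of $d/2-1$) together with $\Im(\mu)>0$ (for the lower bound $|\mu+i|\geq 1$) do the work.
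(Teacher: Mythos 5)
Your plan matches the paper's proof almost exactly: both specialise Lemma~\ref{lem_es1} to $\lambda=\mu^2$, split according to the sign of $\Re(\mu^2)$ (equivalently $|\Re(\mu)|\gtrless\Im(\mu)$), and reduce the claim to showing that the ratio of each resulting term to the target $|\mu+i|^{\delta}/(|\Im(\mu)|^{\alpha}|\mu|^{\nu})$ is bounded by a constant; your invocation of Lemma~\ref{sq} in place of the paper's direct use of the identities $\Re(\mu^2)=\Re(\mu)^2-\Im(\mu)^2$, $\Im(\mu^2)=2\Re(\mu)\Im(\mu)$ and the bound $|\mu|\le\sqrt{2}|\Re(\mu)|$ is a cosmetic difference. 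One minor misstatement in the $\Re(\mu^2)\le 0$ case: to pass from $1/|\mu|^{2p-d}$ to something with $|\Im(\mu)|^{p-1}$ in the denominator you need the universally valid inequality $|\Im(\mu)|\le|\mu|$ (so that $|\Im(\mu)|^{p-1}\le|\mu|^{p-1}$ after cross-multiplying), not $|\mu|\le\sqrt2\,|\Im(\mu)|$, which goes the wrong way; this does not affect the soundness of the plan, since the needed inequality is trivially available.
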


\begin{proof}
Let us consider first the case $0< \Im(\mu) < |\Re(\mu)|$. Since
$\Re(\mu^2)= \Re(\mu)^2-\Im(\mu)^2> 0$ and $\Im(\mu^2)=2\Re(\mu)\Im(\mu)$, Lemma \ref{lem_es1} implies
$$\|M_V[\mu^2-H_0]^{-1}\|_{\mathbf{S}_p}^p
\leq C(p,d) \|V\|_{L^p}^p  \left[\frac{|\Re(\mu)^2-\Im(\mu)^2|^{\frac{d-2}{2}}}{|2\Re(\mu)\Im(\mu)|^{p-1}}
+\frac{1}{|2\Re(\mu)\Im(\mu)|^{p-\frac{d}{2}}} \right].$$
Hence, to show (\ref{eq:po}), it is sufficient to show that
\begin{equation}\label{s11}
\frac{|\Re(\mu)^2-\Im(\mu)^2|^{\frac{d-2}{2}}
|\mu|^{p-\frac{d}{2}}}{|2\Re(\mu)|^{p-1}|\mu+i|^{\frac{d}{2}-1}}\;\;  \text{ and } \;\; \frac{ |\mu|^{p-\frac{d}{2}} \Im(\mu)^{\frac d 2
-1}}{|2\Re(\mu)|^{p-\frac d 2}|\mu+i|^{\frac{d}{2}-1}}
\end{equation}
are bounded from above by a suitable constant $C(p,d)$. In the following, we will provide such a bound for the first quotient in (\ref{s11}) (a similar computation for the second quotient will be omitted). Since $|\Re(\mu)|>\Im(\mu)>0$ one deduces that
$|\mu|\leq \sqrt{2} |\Re(\mu)|$  and $|\Re(\mu)^2-\Im(\mu)^2| \leq 2 | \Re(\mu) |^2$. Thus, we obtain
\begin{eqnarray*}
\frac{|\Re(\mu)^2-\Im(\mu)^2|^{\frac{d-2}{2}}
|\mu|^{p-\frac{d}{2}}}{|2\Re(\mu)|^{p-1}|\mu+i|^{\frac{d}{2}-1}}
&\leq& \frac 1 {2^{ \frac p 2 - \frac d 4}} \left(
\frac{|\Re(\mu)|}{|\mu+i|}\right)^{\frac{d}{2}-1} \leq \frac 1 {2^{
\frac p 2 - \frac d 4}}.
\end{eqnarray*}
The proof of (\ref{eq:po}) in case that $\Im(\mu) > 0$ and $|\Re(\mu)| \leq \Im(\mu)$ follows the same lines as above and will therefore be omitted.
\end{proof}

\subsection{Proof of Theorem \ref{new2}}
Lemma \ref{po} implies that for $p \geq 2$ and $p>\frac d 2$
\begin{equation*}
\|M_V[\mu^2-H_0]^{-1}\|_{\mathbf{S}_p}^p\leq C(p,d) \| V\|_{L^p}^p
\frac{|\mu+i|^\delta}{|\Im(\mu)|^\alpha |\mu|^\nu},
\end{equation*}
where $\nu=p-\frac{d}{2}, \: \delta=\frac{d}{2}-1$ and $\alpha=p-1$.
With the notation of Theorem \ref{new1} we have for $0<\tau < 1$,
\begin{equation*}
  \begin{array}{cll}
    \rho & = & \delta +2(p-\alpha) - \nu = d - p +1  \\ [4pt]
    \eta_1 &=& \frac{1}{2}(\alpha + 1 + \tau)= \frac 1 2 ( p + \tau ) \\ [4pt]
    \eta_2 &=& \frac{1}{2}(\nu-1+\tau)_+ = \frac 1 2 ( p - \frac d 2 - 1 + \tau )_+ \\[4pt]
    \eta_3 &=& \frac{\alpha+\nu-\delta}{2}-\tau = p - \frac d 2 - \tau
  \end{array}
\end{equation*}
and an application of Theorem \ref{new1} shows that
\begin{equation*}
\sum_{\lambda\in\:
  \sigma_{d}(H)}\frac{\dist(\lambda,[0,\infty))^{p + \tau}}{|\lambda|^{\frac 1 2 (p+\tau-(p-\frac d 2-1 +\tau)_+)}(|\lambda|+1)^{\frac 1 2 (d-p + 3\tau + (p-\frac d 2 -1+\tau)_+)}}
\leq  C \|V\|_{L^p}^p,
\end{equation*}
where $C=C(d,p,\tau)(1+\omega_0)^{\frac 1 2 ( p + d + \tau + (p - \frac d 2 - 1 + \tau)_+)}$. Simplifying 
 the above expression in the cases $p-\frac{d}{2}\geq 1-\tau$ and $p-\frac{d}{2}< 1-\tau$  we get
 (\ref{eq:1}), (\ref{eq:2}).

\subsection{Proof of Corollary \ref{cor:frank}}

Restricting the generalized Lieb-Thirring inequality (\ref{frank})
to the set $\Re(\lambda)>0$, we obtain
  \begin{equation}\label{frank2}
      \sum_{\lambda \in \; \sigma_{d}(H), |\Im(\lambda)| \geq \chi \Re(\lambda)>0} |\lambda|^\kappa \leq C(d,\kappa)\left(1+\frac 2 \chi\right)^{\frac d 2 + \kappa} \int_{\mr^d} |V(y)|^{\frac d 2 + \kappa} dy.
  \end{equation}
We multiply both sides of (\ref{frank2}) with $\chi^{\frac d 2 +
\kappa-1+\tau}$, where $0 < \tau < 1$, and integrate over $\chi \in
(0,1)$. Interchanging sum and integral, one obtains for the LHS
\begin{eqnarray*}
&& \int_0^1 d\chi \; \chi^{\frac d 2 + \kappa-1+\tau} \sum_{\lambda \in \; \sigma_{d}(H), |\Im(\lambda)| \geq \chi \Re(\lambda)>0} |\lambda|^\kappa \\
&=& \sum_{\lambda \in \; \sigma_{d}(H), \Re(\lambda)>0} |\lambda|^\kappa \int_0^{\min(\frac{|\Im(\lambda)|}{\Re(\lambda)},1)} d\chi \; \chi^{\frac d 2 + \kappa-1+\tau} \\
&=& C(d,\kappa,\tau) \sum_{\lambda \in \; \sigma_{d}(H), \Re(\lambda)>0} |\lambda|^\kappa \min\left(1,\left(\frac{|\Im(\lambda)|}{\Re(\lambda)}\right)^{\frac d 2 + \kappa+\tau}\right) \\
&\geq& C(d,\kappa,\tau) \sum_{\lambda \in \; \sigma_{d}(H), |\Im(\lambda)| \leq \Re(\lambda)} |\lambda|^\kappa \left(\frac{|\Im(\lambda)|}{\Re(\lambda)}\right)^{\frac d 2 + \kappa+\tau} \\
&\geq& C(d,\kappa,\tau) \sum_{\lambda \in \; \sigma_{d}(H),
|\Im(\lambda)| \leq \Re(\lambda)}
\frac{\dist(\lambda,[0,\infty))^{\frac d 2 + \kappa
+\tau}}{|\lambda|^{\frac d 2 + \tau}}.
\end{eqnarray*}
Similarly, we obtain for the RHS of (\ref{frank2})
\begin{small}
\begin{eqnarray*}
\int_0^1 d\chi \; \left(1+\frac 2 \chi\right)^{\frac d 2 + \kappa}
\chi^{\frac d 2 + \kappa - 1 + \tau}  \int_{\mr^d} |V(y)|^{\frac d 2
+ \kappa} dy   \leq C(d,\kappa,\tau) \int_{\mr^d} |V(y)|^{\frac d 2
+ \kappa} dy .
\end{eqnarray*}
\end{small}
This shows that
$$    \sum_{\lambda \in \; \sigma_{d}(H), |\Im(\lambda)| \leq \Re(\lambda)} \frac{\dist(\lambda,[0,\infty))^{\frac d 2 + \kappa + \tau}}{|\lambda|^{\frac d 2 + \tau}} \leq C(d,\kappa,\tau) \int_{\mr^d} |V(y)|^{\frac d 2 + \kappa} dy.$$
Using Theorem \ref{thm:frank} with $\chi=1$ gives that the same
inequality is true summing over all eigenvalues $\lambda$ with
$|\Im(\lambda)| \geq \Re(\lambda)$. Setting $p=\kappa+\frac{d}{2}$ completes the proof of
Corollary \ref{cor:frank}.

\end{document}